\newtoks\keywordstoks
\newtheorem{thm}{Theorem}[section]
\newtheorem{crl}[thm]{Corollary}
\newtheorem{lmm}[thm]{Lemma}
\newtheorem{prp}[thm]{Proposition}
\theoremstyle{definition}
\newtheorem{dfn}[thm]{Definition}
\theoremstyle{remark}
\newtheorem*{rem}{Remark}
\title[de Rham cohomology of diffeological spaces]{Long exact sequences for de Rham cohomology of diffeological spaces}
\author{Tadayuki Haraguchi}
\date{\today}
\subjclass[2010]{Primary 14F40; Secondary 46M18}
\keywords{diffeological space, de Rham cohomology, D-Topology}
\begin{document}
\maketitle
%
%
%
%
\begin{abstract}
 In this paper we present the notion of de Rham cohomology with compact support for diffeological spaces.
 Moreover we shall discuss the existence of three long exact sequences.
 As a concrete example,
 we show that long exact sequences exist for the de Rham cohomology of diffeological subcartesian spaces.
\end{abstract}
%
%
%
%
\section{Introduction}
 Generally,
 the de Rham cohomology is a cohomology based on differential forms of a topological smooth manifold.
 In \cite{Sou}, 
 J.-M. Souriau introduced diffeological spaces as generalization of the notions of topological smooth manifolds.
 Moreover, 
 In \cite{Zem},
 P. Iglesias-Zemmour extended the notions of differenrial forms and de Rham cohomology groups on difeological spaces.
%
 
 In Section 2 we discuss about the Hausdorffness, 
 the compactness, 
 the paracompactness,
 and the normality of $D$-topology of a diffeological space.
 Since the inclusion is not compatible with $D$-topologies,
 we need to be cautious in dealing with these notions.
 But we can show that if a diffeological space is $D$-paracompact and $D$-Hausdorff,
 then it is $D$-normal.
 In Section 3
 we introduce "diffeological subcartesian spaces" which is a diffeological space locally diffeomorphic to a (not necessarily open) subspace of an Euclidean space. 
 It is shown that every diffeological subcartesian space has a partition of unity subordinate to any $D$-open cover.
 In Section 4
 we discuss de Rham cohomolgy (with compact support) in respect to diffeological spaces.
 It is shown that if there is a $D$-open cover $\{A,\,B\}$ of $X$ such that there exists a partition of unity subordinate to it,
 then we have a Mayer-Vietoris exact sequence of de Rham cohomology groups (see Theorem \ref{thm:Mayer-Vietoris 1}):
  \[
   \to  H^{p}_{\rm dR}(X) \xrightarrow{j^{\ast}_{1}\oplus j^{\ast}_{2}} H^{p}_{\rm dR}(A)\oplus H^{p}_{\rm dR}(B) \xrightarrow{i_{1}^{\ast}-i_{2}^{\ast}}  H^{p}_{\rm dR}(A \cap B)
   \xrightarrow{\delta}  H^{p}_{\rm dR}(X) \to \cdots.
  \]
 On the other hand,
 we shall see in Section 5 that
 if $X$ is $D$-Hausdorff,
 then there is a Mayer-Vietoris exact sequence of de Rham cohomology groups with compact support (see Theorem \ref{thm:Mayer-Vietoris 2}):
  \[
   \to H^{p}_{c}(A \cap B) \xrightarrow{i_{1 \ast} \oplus i_{2 \ast}} H^{p}_{c}(A) \oplus H^{p}_{c}(B) \xrightarrow{j_{1 \ast} - j_{2 \ast}} H^{p}_{c}(X) 
   \xrightarrow{\delta} H^{p+1}_{c}(A \cap B) \to \cdots.
  \]
 In particular,
 if $X$ is a diffeological subcartesian space,
 then there exist both types of Mayer-Vietoris exact sequences.
 In Section 6,
 we introduce a long exact sequence for pair of diffeological spaces.
 Let $A$ be a $D$-compact set of a diffeological subcartesian space $X$.
 If there exists a $D$-open set $M$ of $X$ such that $A$ is a deformation retract of $M$,
 then we have a long exact sequence (see Theorem \ref{thm:exact sequence}):
  \[
   \to H^{p}_{c}(X \setminus A) \xrightarrow{i_{\ast}} H^{p}_{c}(X) \xrightarrow{j^{\ast}} H^{p}_{c}(A) \xrightarrow{\delta} H^{p+1}_{c}(X \setminus A) \to \cdots.
  \] 
  
  I would like to thank Kazuhisa Shimakawa,
  who suggested me the idea of using differential $p$-forms on a diffeological space with compact support (see Definition \ref{def:definition of compact support}).
\section{The $D$-Topology for Diffeological spaces}
%
 A diffeological space consists of a set $X$ together with
 a family $D$ of maps from open subsets of Euclidean spaces into $X$
 satisfying the following conditions:
  \begin{description}
   \item[{\bf Covering}]
    Any constant parametrization $\mathbf{R}^n \to X$ belongs to $D$.
   \item[{\bf Locality}]
    A parametrization $P \colon U \to X$ belongs to $D$ if every point $u$ of $U$ has a neighborhood $W$ such that $P|W \colon W \to X$ belongs to $D$.
   \item[{\bf Smooth compatibility}]
    If $P \colon U \to X$ belongs to $D$, then so does the composite $P\circ Q \colon V \to X$ for any smooth map $Q \colon V \to U$ between open subsets of Euclidean spaces.
  \end{description}
 We call $D$ a diffeology of $X$, and each member of $D$ a plot of $X$.
 A map $f \colon X \to Y$ between diffeological spaces is called smooth if for any plot $P \colon U \to X$ of $X$,
 the composite $f \circ P \colon U \to Y$ is a plot of $Y$.
 Clearly, the class of diffeological spaces and smooth maps form
 a category $\bf Diff$.
  \begin{thm}[{\cite[1.60]{Zem}}, {\cite[2.1]{KKH}} ]
   The category $\bf Diff$ is complete, cocomplete, and cartesian closed.
  \end{thm}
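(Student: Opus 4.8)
The plan is to verify the three properties through the forgetful functor $U \colon \mathbf{Diff} \to \mathbf{Set}$, which I will exhibit as a topological functor. First I would establish completeness by constructing products and equalizers. Given a family $\{X_i\}_{i \in I}$ of diffeological spaces, I equip the Cartesian product $\prod_i X_i$ with the coarsest diffeology making every projection smooth: a parametrization $P \colon U \to \prod_i X_i$ is declared a plot precisely when each composite $\mathrm{pr}_i \circ P$ is a plot of $X_i$. For the equalizer of two smooth maps $f,g \colon X \to Y$, I take the subset $E = \{x \in X : f(x) = g(x)\}$ with the subspace diffeology, whose plots are those plots of $X$ factoring set-theoretically through $E$. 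In each case I would check the three diffeology axioms (Covering, Locality, Smooth compatibility) and then verify the universal property directly from the definition of a smooth map.

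Dually, cocompleteness follows from coproducts and coequalizers, now using the finest diffeology compatible with the relevant maps. For the coproduct $\coprod_i X_i$ I take the disjoint union of sets with the sum diffeology, in which a parametrization is a plot iff it locally factors through one of the canonical injections as a plot of the corresponding $X_i$; the Locality axiom is exactly what makes this prescription well defined. For the coequalizer of $f, g \colon X \to Y$ I form the quotient set $Q = Y/\!\sim$ by the smallest equivalence relation identifying $f(x)$ with $g(x)$, endowed with the quotient diffeology, declaring $P \colon U \to Q$ a plot when every point of $U$ has a neighborhood on which $P$ lifts to a plot of $Y$. The universal properties again reduce to unwinding the definition of smoothness.

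For cartesian closedness I would construct, for diffeological spaces $X$ and $Y$, the internal hom $C^{\infty}(X,Y)$ as the set of all smooth maps $X \to Y$ equipped with the functional diffeology: a parametrization $P \colon U \to C^{\infty}(X,Y)$ is a plot exactly when the adjoint map $U \times X \to Y$, $(u,x) \mapsto P(u)(x)$, is smooth. After checking this is a diffeology, the crux is the exponential law, namely the natural bijection
\[
 \mathbf{Diff}(Z \times X,\, Y) \;\cong\; \mathbf{Diff}(Z,\, C^{\infty}(X,Y)),
\]
sending a smooth map to its transpose. I would prove both directions by testing against arbitrary plots: smoothness of $Z \to C^{\infty}(X,Y)$ is by definition the smoothness of the associated map out of $Z \times X$, while conversely one uses that evaluation $C^{\infty}(X,Y) \times X \to Y$ is smooth together with the product diffeology on $Z \times X$.

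The main obstacle I anticipate is the cartesian closedness, and within it the verification that the functional diffeology genuinely satisfies the Locality axiom and that the exponential bijection is natural in all variables; the delicate point is matching plots of a product with the adjoint-transpose condition, which requires careful use of Smooth compatibility to pass between parametrizations of $Z \times X$ and of $Z$. By contrast, completeness and cocompleteness are more routine once one observes that $U$ admits initial and final lifts of structured sources and sinks, so that limits and colimits are computed on underlying sets and then endowed with the coarsest, respectively finest, admissible diffeology.
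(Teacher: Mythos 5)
The paper does not prove this theorem itself---it imports it by citation from \cite{Zem} and \cite{KKH}---and your sketch is precisely the standard argument given in those sources: limits built from products and equalizers carrying the initial (product/subspace) diffeology, colimits from coproducts and coequalizers carrying the final (sum/quotient) diffeology, and cartesian closedness via the functional diffeology on $C^{\infty}(X,Y)$ together with the exponential law, with the genuinely delicate step (passing between plots of $Z\times X$ and the adjoint condition using Smooth compatibility) correctly identified. Your proposal is correct and takes essentially the same route as the cited proofs, so there is nothing to flag.
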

 Let $X$ be a diffeological space.
 Let $A$ be a subset of $X$.
 We say that $A$ is $D$-open in $X$ if for any plot $P \colon U \to X$ of $X$,
 $P^{-1}(A)$ is open in $U$.
 A subset $A$ is called $D$-closed in $X$ if for any plot $P \colon U  \to X$ of $X$,
 $P^{-1}(A)$ is closed in $U$.
 We will denote the closure of $A$ by $\overline{A}$.
 That is to say,
 $\overline{A}$ is the smallest $D$-closed subset containing $A$.
  \begin{lmm}[{\cite[3.17]{DGE}}]\label{lmm:D-open subspace}
   Let $A$ be a $D$-open set of diffeological space $X$.
   Then a subset $B$ of $A$ is $D$-open in $X$ if and only if it is $D$-open in $A$.
  \end{lmm}
  \begin{rem}
   Let $A$ be a subset of a diffeological space $X$.
   Then we can give $A$ two topologies:
    \begin{enumerate}
     \item
      $\tau_{1}(A) \colon$the $D$-topology of the sub-diffeology on $A$;
     \item
      $\tau_{2}(A) \colon$the sub-topology of the $D$-topology on $X$.
    \end{enumerate}
   However,
   these topologies are not always the same.
   In general,
   we can only conclude that $\tau_{2}(A) \subseteq \tau_{1}(A)$.
   Therefore we need to be careful when defining separation axioms and compactness.
  \end{rem}
 \begin{dfn}[$D$-Hausdorff space]
 A diffeological space $X$ is $D$-Hausdorff if and only if for any elements $x$ and $y$ of $X$,
 there are $D$-open neighborhoods $U_{x}$ and $U_{y}$ of $x$ and $y$,
 respectively,
 such that $U_{x} \cap U_{y}= \emptyset$.
 \end{dfn}
 We have the following by the above remark.
  \begin{lmm}\label{lmm:D-Hausdorff}
   Let $A$ be a subspace of $X$.
   If $X$ is $D$-Hausdorff,
   then $A$ is also $D$-Hausdorff.
  \end{lmm}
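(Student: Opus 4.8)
The plan is to exploit the containment of topologies $\tau_{2}(A) \subseteq \tau_{1}(A)$ recorded in the preceding remark, together with the smoothness of the inclusion $i \colon A \to X$. The essential observation is that a pair of separating neighborhoods in $X$ can be pulled back along $i$ to $A$, and that the resulting subspace-open sets are automatically $D$-open for the sub-diffeology. Since ``$A$ is $D$-Hausdorff'' refers to the $D$-topology $\tau_{1}(A)$ of the sub-diffeology, it is exactly this upgrade from $\tau_{2}(A)$ to $\tau_{1}(A)$ that the argument hinges on.

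First I would fix two distinct points $x, y \in A$. Viewing them as points of $X$ and invoking the hypothesis that $X$ is $D$-Hausdorff, I obtain $D$-open sets $U_{x}, U_{y}$ of $X$ with $x \in U_{x}$, $y \in U_{y}$, and $U_{x} \cap U_{y} = \emptyset$. I then set $V_{x} := U_{x} \cap A$ and $V_{y} := U_{y} \cap A$, which by construction belong to the subspace topology $\tau_{2}(A)$.

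The key step is to promote these to $D$-open sets of the sub-diffeology. By the remark we have $\tau_{2}(A) \subseteq \tau_{1}(A)$, so $V_{x}, V_{y} \in \tau_{1}(A)$ at once; concretely, for any plot $Q \colon W \to A$ of the sub-diffeology the composite $i \circ Q$ is a plot of $X$, whence $Q^{-1}(V_{x}) = (i \circ Q)^{-1}(U_{x})$ is open in $W$, confirming $V_{x} \in \tau_{1}(A)$ (and likewise for $V_{y}$). Finally $x \in V_{x}$, $y \in V_{y}$, and $V_{x} \cap V_{y} = (U_{x} \cap U_{y}) \cap A = \emptyset$, so $V_{x}$ and $V_{y}$ are disjoint $D$-open neighborhoods separating $x$ and $y$ in $A$, and $A$ is $D$-Hausdorff.

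The only point demanding care is the direction of the topology inclusion, which is precisely why the remark is invoked rather than glossed over. The pullback strategy succeeds because subspace-open sets are sub-diffeology-$D$-open; had the separation axiom instead required neighborhoods open in $\tau_{2}(A)$ while the construction only produced $\tau_{1}(A)$-open sets, this approach would break down. As it stands the favorable containment makes the lemma routine, and I expect no genuine obstacle beyond keeping the two topologies on $A$ carefully distinguished.
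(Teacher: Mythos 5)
Your proof is correct and follows the same route as the paper: the paper proves this lemma simply by appealing to the remark that $\tau_{2}(A) \subseteq \tau_{1}(A)$, which is exactly the containment you spell out by intersecting the separating $D$-open sets of $X$ with $A$ and checking openness of the preimages under plots of the sub-diffeology. Your write-up just makes explicit the verification the paper leaves implicit.
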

  %
  %
  %
  %
  %
  %
 \begin{dfn}[$D$-compact space]
  Let $C$ be a subset of a diffeological space $X$.
  We say that $C$ is $D$-compact in $X$ if every covers of $C$ consisting of $D$-open sets of $X$ have a finite cover.
  If $X$ is $D$-compact in $X$,
  then it is called to be $D$-compact.
 \end{dfn}
  Then we have the following by Lemma \ref{lmm:D-open subspace}.
   \begin{prp}\label{prp:D-compact equivalent}
    Let $A$ be a $D$-open set of $X$.
    Then a subset $C$ of $A$ is $D$-compact in $X$ if and only if it is $D$-compact in $A$. 
   \end{prp}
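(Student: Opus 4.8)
The plan is to verify both implications directly from the definition of $D$-compactness, using Lemma \ref{lmm:D-open subspace} as the bridge that converts $D$-openness in $X$ into $D$-openness in $A$ and back. The hypothesis that $A$ is $D$-open in $X$ is exactly what makes this bridge available.

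For the forward direction, I would suppose that $C$ is $D$-compact in $X$ and take an arbitrary cover $\{V_{\lambda}\}$ of $C$ by sets that are $D$-open in $A$. Since $A$ is $D$-open in $X$, Lemma \ref{lmm:D-open subspace} says that each $V_{\lambda}$ is also $D$-open in $X$, so $\{V_{\lambda}\}$ becomes a cover of $C$ by $D$-open sets of $X$. The $D$-compactness of $C$ in $X$ then yields a finite subcover, which is precisely what is required to conclude that $C$ is $D$-compact in $A$. For the converse, I would start from a cover $\{W_{\lambda}\}$ of $C$ by $D$-open sets of $X$. Because $C \subseteq A$, the family $\{W_{\lambda} \cap A\}$ still covers $C$. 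The key observation is that each $W_{\lambda} \cap A$ is again $D$-open in $X$: the $D$-open sets of $X$ form a topology (for any plot $P$ one has $P^{-1}(W_{\lambda} \cap A) = P^{-1}(W_{\lambda}) \cap P^{-1}(A)$, an intersection of two open sets), so finite intersections of $D$-open sets are $D$-open. Since $W_{\lambda} \cap A$ is contained in $A$, Lemma \ref{lmm:D-open subspace} makes it $D$-open in $A$. Applying the $D$-compactness of $C$ in $A$ to the cover $\{W_{\lambda} \cap A\}$ produces a finite subfamily covering $C$, and the corresponding sets $W_{\lambda}$ then cover $C$ as well, establishing $D$-compactness in $X$.

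The only delicate point — and the reason the hypothesis cannot be dropped — is the transfer of $D$-openness between $A$ and $X$. As the remark preceding the statement warns, for a general subset the subspace $D$-topology $\tau_{1}(A)$ and the induced topology $\tau_{2}(A)$ need not coincide, so $D$-openness in $A$ and $D$-openness in $X$ are not interchangeable. Lemma \ref{lmm:D-open subspace} guarantees their agreement exactly when $A$ is $D$-open, and it is invoked once in each direction; I do not anticipate any obstacle beyond correctly keeping track of which ambient space each cover lives in.
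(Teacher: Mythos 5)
Your proof is correct and takes exactly the approach the paper intends: the paper states this proposition without a written proof, remarking only that it follows from Lemma \ref{lmm:D-open subspace}, and your argument is precisely the careful spelling-out of that reduction, with both transfers of $D$-openness handled properly (including the observation that $W_{\lambda}\cap A$ is $D$-open in $X$ because the $D$-open sets form a topology).
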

  It is not difficult to prove the following.
   \begin{prp}\label{prp: subcompact}
    If $X$ is $D$-compact,
    then every $D$-closed subset of $X$ is also $D$-compact in $X$.
   \end{prp}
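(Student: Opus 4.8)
The plan is to adapt the classical argument that a closed subset of a compact space is compact, working throughout with the $D$-topology of $X$ so that the $\tau_{1}/\tau_{2}$ discrepancy noted in the Remark never enters the picture.

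First I would record the elementary observation that the complement of a $D$-closed set is $D$-open. If $A$ is $D$-closed in $X$ and $P \colon U \to X$ is any plot of $X$, then $P^{-1}(X \setminus A) = U \setminus P^{-1}(A)$ is open in $U$, precisely because $P^{-1}(A)$ is closed in $U$ by the definition of $D$-closedness. Hence $X \setminus A$ is $D$-open in $X$. This is the only place where the two defining conditions interact, and it is immediate once one unwinds the plot-level definitions.

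Next, let $\{U_{i}\}_{i \in I}$ be an arbitrary cover of $A$ by $D$-open sets of $X$. Adjoining the $D$-open set $X \setminus A$ yields a family $\{U_{i}\}_{i \in I} \cup \{X \setminus A\}$ consisting of $D$-open sets of $X$ whose union is all of $X$. Since $X$ is $D$-compact, this cover admits a finite subcover, indexed by some finite $J \subseteq I$ together with possibly the set $X \setminus A$.

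Finally, I would discard $X \setminus A$ from this finite subcover if it occurs. Because $(X \setminus A) \cap A = \emptyset$, the sets $\{U_{i}\}_{i \in J}$ still cover $A$, which is exactly a finite subcover drawn from the original family; this establishes that $A$ is $D$-compact in $X$. The argument manipulates only $D$-open subsets of $X$, so there is no serious obstacle: the sole step demanding attention is the passage from $D$-closed to $D$-open complement, and even that reduces to the trivial fact that a preimage is closed if and only if its complementary preimage is open.
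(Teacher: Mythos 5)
Your proof is correct and complete: the observation that the complement of a $D$-closed set is $D$-open follows immediately from the plot-level definitions, and the rest is the classical ``adjoin the complement, extract a finite subcover, discard the complement'' argument. The paper itself gives no proof of this proposition (it is dismissed with ``It is not difficult to prove''), but your argument is exactly the intended one --- indeed it is the same pattern the paper uses explicitly in its proof that a $D$-closed subset of a $D$-paracompact space is $D$-paracompact in $X$.
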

   \begin{prp}
    If $X$ is $D$-Hausdorff,
    then every $D$-compact subset of $X$ is $D$-closed in $X$.
   \end{prp}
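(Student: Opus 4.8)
The plan is to transport the classical point-set argument ``compact subsets of a Hausdorff space are closed'' into the $D$-topology, taking care that every notion involved is read relative to the $D$-topology of the ambient space $X$. First I would record that the $D$-open subsets of $X$ form a genuine topology on $X$: the empty set and $X$ are $D$-open, and for any plot $P \colon U \to X$ one has $P^{-1}(\bigcup_i A_i) = \bigcup_i P^{-1}(A_i)$ and $P^{-1}(A \cap B) = P^{-1}(A) \cap P^{-1}(B)$, so arbitrary unions and finite intersections of $D$-open sets are $D$-open. From the plot-wise definitions it also follows that a subset $A$ is $D$-closed in $X$ if and only if its complement $X \setminus A$ is $D$-open in $X$, since $P^{-1}(X \setminus A) = U \setminus P^{-1}(A)$ for every plot $P$.

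Let $C$ be a $D$-compact subset of $X$. By the equivalence just noted, it suffices to show that $X \setminus C$ is $D$-open. I would fix an arbitrary point $x \in X \setminus C$ and, for each $y \in C$, apply $D$-Hausdorffness to obtain disjoint $D$-open neighborhoods $U_{x,y} \ni x$ and $V_{y} \ni y$ in $X$. The family $\{V_{y}\}_{y \in C}$ is then a cover of $C$ by $D$-open sets of $X$, so $D$-compactness supplies finitely many points $y_{1}, \dots, y_{n} \in C$ with $C \subseteq \bigcup_{i=1}^{n} V_{y_{i}}$.

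Set $W = \bigcap_{i=1}^{n} U_{x,y_{i}}$. As a finite intersection of $D$-open sets it is $D$-open, and it contains $x$. Moreover $W \cap V_{y_{i}} \subseteq U_{x,y_{i}} \cap V_{y_{i}} = \emptyset$ for each $i$, so $W$ is disjoint from $\bigcup_{i=1}^{n} V_{y_{i}}$ and hence from $C$. Thus every point of $X \setminus C$ has a $D$-open neighborhood contained in $X \setminus C$, whence $X \setminus C$ is $D$-open and $C$ is $D$-closed.

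The only point requiring attention---and the reason the earlier remark on $\tau_{1}$ versus $\tau_{2}$ is relevant---is that both $D$-compactness and $D$-Hausdorffness must be interpreted relative to the $D$-topology of the ambient $X$: the covers provided by $D$-compactness consist of $D$-open sets \emph{of $X$}, and the separating neighborhoods provided by $D$-Hausdorffness are $D$-open \emph{in $X$}. Because both hypotheses are phrased in these ambient terms, the finite subcover and the finite intersection $W$ remain $D$-open in $X$, and no discrepancy between the sub-diffeology topology and the subspace topology intervenes. I therefore expect no genuine obstacle here; the proof is the standard separation argument carried out verbatim inside the $D$-topology, with the verification that $D$-open sets form a topology being the only preliminary worth spelling out.
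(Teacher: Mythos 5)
Your proof is correct. The paper itself offers no proof of this proposition (it is stated after the remark that such facts are ``not difficult to prove''), and your argument --- the classical separation argument for compact subsets of Hausdorff spaces, carried out entirely with respect to the $D$-topology of the ambient space $X$ --- is exactly the standard argument the author intends; your preliminary observations (that $D$-open sets form a topology on $X$, and that a subset is $D$-closed if and only if its complement is $D$-open, via $P^{-1}(X \setminus A) = U \setminus P^{-1}(A)$ for every plot $P \colon U \to X$) are both valid under the paper's plotwise definitions, and your closing remark correctly notes that no $\tau_{1}$-versus-$\tau_{2}$ subtlety arises because both hypotheses are phrased in terms of $D$-open sets of $X$.
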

   %
   %
   %
   %
   %
   %
  We turn to $D$-paracompactness.
  Let $X$ be a diffeological space.
  A collection $\{W_{\lambda}\}_{\lambda \in \Lambda}$ of subsets of $X$ is called locally finite if each $x \in X$ has an $D$-open neighborhood whose intersection with $W_{\lambda}$ is 
  non-empty only for finitely many $\lambda$.
   \begin{lmm}\label{lmm:D-paracompact}
    Let $\{W_{\lambda}\}_{\lambda \in \Lambda}$ be a collection of subsets of $X$.
    If $\{W_{\lambda}\}_{\lambda \in \Lambda}$ is locally finite,
    then $\cup_{\lambda \in \Lambda} \overline{W}_{\lambda}=\overline{\cup_{\lambda \in \Lambda}W_{\lambda}}$ holds.
   \end{lmm}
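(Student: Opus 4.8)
The plan is to prove the nontrivial inclusion $\overline{\cup_{\lambda}W_{\lambda}} \subseteq \cup_{\lambda}\overline{W}_{\lambda}$ by showing that the right-hand side is already $D$-closed; the reverse inclusion $\cup_{\lambda}\overline{W}_{\lambda} \subseteq \overline{\cup_{\lambda}W_{\lambda}}$ is immediate from monotonicity of the closure operator and does not use local finiteness, since $W_{\lambda} \subseteq \cup_{\mu}W_{\mu}$ gives $\overline{W}_{\lambda} \subseteq \overline{\cup_{\mu}W_{\mu}}$ for every $\lambda$. Granting that $A := \cup_{\lambda}\overline{W}_{\lambda}$ is $D$-closed, and noting that $A$ contains $\cup_{\lambda}W_{\lambda}$ (because each $W_{\lambda} \subseteq \overline{W}_{\lambda}$), the fact that $\overline{\cup_{\lambda}W_{\lambda}}$ is the smallest $D$-closed set containing $\cup_{\lambda}W_{\lambda}$ forces $\overline{\cup_{\lambda}W_{\lambda}} \subseteq A$, completing the argument.

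First I would upgrade local finiteness from the family $\{W_{\lambda}\}$ to the family $\{\overline{W}_{\lambda}\}$. Given $x \in X$, choose a $D$-open neighborhood $V$ of $x$ meeting $W_{\lambda}$ for only finitely many $\lambda$. I claim the same $V$ meets $\overline{W}_{\lambda}$ for only those finitely many $\lambda$: if $V \cap \overline{W}_{\lambda} \neq \emptyset$, pick $y$ in this intersection; since $V$ is a $D$-open neighborhood of $y$ and $y$ lies in the closure of $W_{\lambda}$, the set $V$ must meet $W_{\lambda}$, so $\lambda$ is among the finitely many indices already singled out. Here I use that the $D$-open sets form a topology on $X$ whose closure operator coincides with $\overline{(\cdot)}$, so that membership in $\overline{W}_{\lambda}$ is detected by $D$-open neighborhoods. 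Thus $\{\overline{W}_{\lambda}\}$ is again locally finite.

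Next I would verify $D$-closedness of $A$ by testing against plots. Let $P \colon U \to X$ be any plot; I must show that $P^{-1}(A) = \cup_{\lambda}P^{-1}(\overline{W}_{\lambda})$ is closed in $U$. Each $P^{-1}(\overline{W}_{\lambda})$ is closed in $U$ because $\overline{W}_{\lambda}$ is $D$-closed. Moreover the family $\{P^{-1}(\overline{W}_{\lambda})\}$ is locally finite in $U$: for $u \in U$ take the $D$-open neighborhood $V$ of $P(u)$ produced above, so that $P^{-1}(V)$ is an open neighborhood of $u$ in $U$, and observe that $P^{-1}(V) \cap P^{-1}(\overline{W}_{\lambda}) = P^{-1}(V \cap \overline{W}_{\lambda})$ is nonempty only when $V \cap \overline{W}_{\lambda} \neq \emptyset$, which happens for finitely many $\lambda$. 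Now I invoke the classical fact that in the ordinary topological space $U$ a locally finite union of closed sets is closed, which yields that $P^{-1}(A)$ is closed. Since $P$ was arbitrary, $A$ is $D$-closed.

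The only delicate point is the transfer of local finiteness; everything else is a direct translation of the classical statement ``a locally finite union of closed sets is closed'' through the defining property of $D$-closed sets. The step I expect to require the most care is justifying that membership in the closure $\overline{W}_{\lambda}$ can be tested by $D$-open neighborhoods, i.e. that $\overline{W}_{\lambda}$ is exactly the closure of $W_{\lambda}$ in the $D$-topology; once this is in hand, both upgrades of local finiteness (on $X$ and then pulled back to $U$) and the reduction to the Euclidean case go through routinely.
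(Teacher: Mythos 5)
Your proposal is correct, but it is organized differently from the paper's argument. The paper proves the nontrivial inclusion pointwise in the $D$-topology: given $x \notin \cup_{\lambda}\overline{W}_{\lambda}$, it takes for each $\lambda$ a $D$-open neighborhood $U_{\lambda}(x)$ disjoint from $W_{\lambda}$, uses local finiteness to get a $D$-open $V \ni x$ meeting only finitely many $W_{\lambda_{1}},\dots,W_{\lambda_{m}}$, and then intersects $V$ with the finitely many $U_{\lambda_{i}}(x)$ to produce a single $D$-open neighborhood of $x$ disjoint from $\cup_{\lambda}W_{\lambda}$, so $x \notin \overline{\cup_{\lambda}W_{\lambda}}$. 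You instead prove that $\cup_{\lambda}\overline{W}_{\lambda}$ is itself $D$-closed --- upgrading local finiteness to the closures, pulling everything back along an arbitrary plot $P \colon U \to X$, and invoking the classical fact that a locally finite union of closed sets in $U$ is closed --- and then conclude by minimality of the closure. Both arguments rest on the same key point, which you correctly flag as the delicate one: $D$-closed sets are exactly the complements of $D$-open sets (since $P^{-1}(X\setminus A) = U \setminus P^{-1}(A)$), so the operator $\overline{(\cdot)}$ is the closure in the $D$-topology and membership in it is detected by $D$-open neighborhoods; the paper uses this silently in both directions of its pointwise argument. Your route is more modular and delegates the combinatorial step to a standard lemma in Euclidean open sets, while the paper's is self-contained and shorter. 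One remark: your detour through plots is not actually needed --- once you know the $D$-open sets form a topology whose closed sets are the $D$-closed sets, you can apply the classical locally-finite-union lemma directly to the topological space $(X,\,D\text{-topology})$, which collapses your second and third steps into one.
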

   \begin{proof}
    It is clear that  $\cup_{\lambda \in \Lambda} \overline{W}_{\lambda} \subset \overline{\cup_{\lambda \in \Lambda}W_{\lambda}}$ holds.
    Conversely,
    let $x \not \in \cup_{\lambda \in \Lambda} \overline{W}_{\lambda}$.
    Then for any $\lambda \in \Lambda$,
    there exists $D$-open neighborhood $U_{\lambda}(x)$ of $x$ such that $U_{\lambda}(x) \cap W_{\lambda}= \emptyset$.
    Since $\{W_{\lambda}\}_{\lambda \in \Lambda}$ is locally finite,
    there exist a $D$-open neighborhood $V$ of $x$ and finitely many $\lambda_{i} \in \Lambda \ (1 \leq i \leq m)$ such that $V \cap W_{\lambda_{i}} \not = \emptyset$.
    Let $U=( \cap_{1 \leq i \leq m} U_{\lambda_{i}}(x)) \cap V$.
    Then $U$ is $D$-open neighborhood of $x$.
    Since for any $\lambda \in \Lambda$,
    $W_{\lambda} \cap U= \emptyset$,
    we have $(U_{\lambda \in \Lambda}W_{\lambda}) \cap U= \emptyset$.
    Therefore $x \not \in \overline{U_{\lambda \in \Lambda}W_{\lambda}}$.
   \end{proof}
  Let $\{V_{\alpha}\}_{\alpha \in I}$ and $\{U_{\beta}\}_{\beta \in J}$ be two covers of $X$.
  We say that $\{V_{\alpha}\}_{\alpha \in I}$ is a refinement of  $\{U_{\beta}\}_{\beta \in J}$ if for any $\alpha \in I$,
  there exists $\beta \in J$ such that $V_{\alpha} \subset U_{\beta}$.
   \begin{dfn}[$D$-paracompact space]
    We say that a subset $A$ of a diffeological space $X$ is $D$-paracompact in $X$ 
    if every cover of $A$ consisting of $D$-open sets of $X$ has a locally finite refinement consisting of $D$-open sets of $X$.
    If $X$ is $D$-paracompact in $X$,
    then we call it $D$-paracompact.
   \end{dfn}
   \begin{prp}
    Let $A$ be a $D$-closed subset of $X$.
    If $X$ is $D$-paracompact,
    then $A$ is $D$-paracompact in $X$.
   \end{prp}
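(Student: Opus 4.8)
The plan is to mimic the classical argument that a closed subspace of a paracompact space is paracompact, being careful to work throughout with $D$-open sets of $X$ rather than with the sub-diffeology on $A$, so as to sidestep the discrepancy $\tau_{2}(A) \subseteq \tau_{1}(A)$ noted in the remark above.

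First I would take an arbitrary cover $\{U_{\lambda}\}_{\lambda \in \Lambda}$ of $A$ consisting of $D$-open sets of $X$. Since $A$ is $D$-closed, its complement $X \setminus A$ is $D$-open in $X$, and therefore $\{U_{\lambda}\}_{\lambda \in \Lambda} \cup \{X \setminus A\}$ is a cover of $X$ consisting of $D$-open sets of $X$. Invoking the $D$-paracompactness of $X$, this cover admits a locally finite refinement $\{V_{\mu}\}_{\mu \in M}$ consisting of $D$-open sets of $X$.

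Next I would extract the subcollection $\mathcal{W} = \{V_{\mu} : V_{\mu} \cap A \neq \emptyset\}$ and argue that it is the desired refinement of $\{U_{\lambda}\}_{\lambda \in \Lambda}$. Indeed, each member of $\mathcal{W}$ meets $A$, hence is not contained in $X \setminus A$; since $\{V_{\mu}\}_{\mu \in M}$ refines $\{U_{\lambda}\}_{\lambda \in \Lambda} \cup \{X \setminus A\}$, every such $V_{\mu}$ must be contained in some $U_{\lambda}$, so $\mathcal{W}$ refines $\{U_{\lambda}\}_{\lambda \in \Lambda}$. The family $\mathcal{W}$ covers $A$ because $\{V_{\mu}\}_{\mu \in M}$ covers $X$: any $a \in A$ lies in some $V_{\mu}$, which then meets $A$ and hence belongs to $\mathcal{W}$. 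Finally, $\mathcal{W}$ is locally finite in $X$ because it is a subcollection of the locally finite family $\{V_{\mu}\}_{\mu \in M}$: a $D$-open neighborhood of a point $x$ meeting only finitely many of the $V_{\mu}$ meets only finitely many members of $\mathcal{W}$ as well.

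The argument contains no genuine obstacle; the only point demanding care is the very first step --- that $X \setminus A$ be $D$-open, which is exactly where the hypothesis that $A$ is $D$-closed enters --- together with the observation that the definition of ``$D$-paracompact in $X$'' is phrased entirely in terms of $D$-open sets of $X$. Because of this, no passage to the sub-diffeology of $A$, and hence no comparison of $\tau_{1}(A)$ with $\tau_{2}(A)$, is ever required, which is what makes the classical proof go through verbatim in the diffeological setting.
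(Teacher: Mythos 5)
Your proof is correct and follows essentially the same route as the paper: adjoin $X \setminus A$ to the given cover, apply $D$-paracompactness of $X$ to obtain a locally finite $D$-open refinement, and keep only those members meeting $A$. The paper states the final verification in one line, whereas you spell out why the subcollection refines the original cover, covers $A$, and remains locally finite; the substance is the same.
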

   \begin{proof}
    Let $\{U_{\lambda}\}_{\lambda \in \Lambda}$ be a cover of $A$ consisting of $D$-open sets of $X$.
    Then ${\sf U}=\{U_{\lambda}\}_{\lambda \in \Lambda} \cup \{X \setminus A\}$ is $D$-open cover of $X$.
    Since $X$ is $D$-paracompact,
    there exists a locally finite refinement $\{V_{\alpha}\}_{\alpha \in I}$ of $\sf U$.
    Let $I^{\prime}=\{\alpha \in I|V_{\alpha} \cap A \not = \emptyset \}$.
    Then $\{V_{\alpha}\}_{\alpha \in I^{\prime}}$ is a locally finite refinement of $\{U_{\lambda}\}_{\lambda \in \Lambda}$.
   \end{proof}
%
%
 \begin{dfn}[$D$-normal space]
  We say that a diffeological space $X$ is $D$-normal if for any $D$-closed sets $A$ and $B$ of $X$ such that $A \cap B= \emptyset$,
  there exist $D$-open sets $U_{A}$ and $U_{B}$ of $X$ such that $A \subset U_{A},\ B \subset U_{B}$ and $U_{A} \cap U_{B}= \emptyset$.
 \end{dfn}
  From the definition,
  it is clear that we have the following.
   \begin{prp}\label{$D$-normal definition}
    A diffeological space $X$ is $D$-normal if and only if for any $D$-closed set $A$ and $D$-open set $B$ of $X$ such that $A \subset B$,
    there exists a $D$-open set $U_{A}$ of $X$ such that $A \subset U_{A} \subset \overline{U}_{A} \subset B$.
   \end{prp}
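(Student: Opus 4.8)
The plan is to recognize this as the classical equivalence between normality and the ``shrinking'' characterization, carried out verbatim in the $D$-topology on $X$ (which is a genuine topology on the underlying set), so that the only diffeology-specific facts needed are that the complement of a $D$-open set is $D$-closed and vice versa, and that $\overline{U}$ is contained in every $D$-closed set containing $U$. Both follow immediately from the definitions: for a plot $P \colon U \to X$ we have $P^{-1}(X \setminus A) = U \setminus P^{-1}(A)$, so passing to complements interchanges the open and closed conditions on preimages; and $\overline{U}$ is by definition the smallest $D$-closed set containing $U$, so $U \subset C$ with $C$ $D$-closed forces $\overline{U} \subset C$.

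For the forward implication I would assume $X$ is $D$-normal and take a $D$-closed set $A$ contained in a $D$-open set $B$. Then $X \setminus B$ is $D$-closed and disjoint from $A$ (since $A \subset B$), so $D$-normality supplies disjoint $D$-open sets $U_{A} \supset A$ and $V \supset X \setminus B$. The disjointness $U_{A} \cap V = \emptyset$ gives $U_{A} \subset X \setminus V$, and since $X \setminus V$ is $D$-closed I conclude $\overline{U}_{A} \subset X \setminus V$. Finally $X \setminus B \subset V$ yields $X \setminus V \subset B$, so the whole chain $A \subset U_{A} \subset \overline{U}_{A} \subset B$ holds, as required.

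For the converse I would assume the shrinking condition and take disjoint $D$-closed sets $A$ and $B$. Then $A \subset X \setminus B$, which is $D$-open, so the hypothesis provides a $D$-open set $U_{A}$ with $A \subset U_{A} \subset \overline{U}_{A} \subset X \setminus B$. Setting $U_{B} = X \setminus \overline{U}_{A}$, which is $D$-open, the inclusion $\overline{U}_{A} \subset X \setminus B$ gives $B \subset U_{B}$, while $U_{A} \subset \overline{U}_{A}$ gives $U_{A} \cap U_{B} = \emptyset$. Together with $A \subset U_{A}$ this exhibits precisely the pair of disjoint $D$-open neighborhoods demanded by $D$-normality.

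I do not expect any genuine obstacle here: the argument is formally identical to the point-set topology proof, and the diffeological content is confined to the two elementary complementation and closure facts noted above. The one point worth stating carefully is that these facts refer to $D$-open and $D$-closed subsets of $X$ \emph{itself}, not to any subspace, so the subtlety $\tau_{2}(A) \subseteq \tau_{1}(A)$ flagged in the earlier remark plays no role in this proof.
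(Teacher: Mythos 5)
Your proof is correct, and there is nothing to compare it against: the paper offers no proof at all, stating only that the proposition is ``clear from the definition.'' Your argument is exactly the standard point-set one the author leaves implicit, and you correctly identify the only diffeology-specific inputs needed --- that $D$-open and $D$-closed sets are complementary, that $\overline{U}$ is the smallest $D$-closed set containing $U$, and that the subspace-topology subtlety ($\tau_2(A) \subseteq \tau_1(A)$) is irrelevant since everything takes place in the $D$-topology of $X$ itself.
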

   \begin{thm}\label{thm:normal}
    If $X$ is $D$-Hausdorff and $D$-paracompact,
    then it is $D$-normal.
   \end{thm}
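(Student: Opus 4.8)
The plan is to follow the classical two-step argument that a paracompact Hausdorff space is normal, taking care to phrase every separation and covering step in terms of $D$-open sets of $X$ and to invoke Lemma \ref{lmm:D-paracompact} at each point where the closure of a locally finite union is needed. Concretely, I would first establish an intermediate $D$-regularity property (separating a point from a disjoint $D$-closed set), and then bootstrap from it to full $D$-normality by repeating the same argument with the point replaced by a $D$-closed set.

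For the regularity step, let $F$ be $D$-closed and $x \notin F$. For each $y \in F$, $D$-Hausdorffness supplies disjoint $D$-open sets $U_{y} \ni x$ and $W_{y} \ni y$; since $U_{y}$ is a $D$-open neighborhood of $x$ missing $W_{y}$, we get $x \notin \overline{W}_{y}$. The family $\{W_{y}\}_{y \in F} \cup \{X \setminus F\}$ is a $D$-open cover of $X$, so $D$-paracompactness yields a locally finite $D$-open refinement $\{V_{\alpha}\}_{\alpha \in I}$. Let $I' = \{\alpha \in I \mid V_{\alpha} \cap F \neq \emptyset\}$. Each such $V_{\alpha}$ cannot lie in $X \setminus F$, hence $V_{\alpha} \subseteq W_{y}$ for some $y$, giving $x \notin \overline{V}_{\alpha}$. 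Setting $W = \cup_{\alpha \in I'} V_{\alpha}$, this is a $D$-open set containing $F$, and Lemma \ref{lmm:D-paracompact} gives $\overline{W} = \cup_{\alpha \in I'} \overline{V}_{\alpha}$, so $x \notin \overline{W}$. Then $X \setminus \overline{W}$ and $W$ are disjoint $D$-open sets separating $x$ from $F$.

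For the normality step I would mimic this verbatim. Let $A$ and $B$ be disjoint $D$-closed sets. For each $b \in B$, since $b \notin A$ and $A$ is $D$-closed, the just-proved $D$-regularity furnishes a $D$-open $W_{b} \ni b$ with $\overline{W}_{b} \cap A = \emptyset$. Taking a locally finite $D$-open refinement $\{V_{\alpha}\}_{\alpha \in I}$ of $\{W_{b}\}_{b \in B} \cup \{X \setminus B\}$ and restricting to $I' = \{\alpha \mid V_{\alpha} \cap B \neq \emptyset\}$, each such $V_{\alpha} \subseteq W_{b}$ satisfies $\overline{V}_{\alpha} \cap A = \emptyset$. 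Then $U_{B} = \cup_{\alpha \in I'} V_{\alpha}$ is $D$-open and contains $B$, and Lemma \ref{lmm:D-paracompact} gives $\overline{U}_{B} = \cup_{\alpha \in I'} \overline{V}_{\alpha}$, which is disjoint from $A$. Putting $U_{A} = X \setminus \overline{U}_{B}$ produces disjoint $D$-open sets with $A \subseteq U_{A}$ and $B \subseteq U_{B}$.

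The main obstacle is the subtlety flagged in the remark preceding these definitions: the subspace and sub-diffeology topologies disagree in general, so I must keep every argument inside $X$ using $D$-open sets of $X$ rather than passing to subspaces. The technical heart of both steps is the identity $\overline{\cup_{\alpha} V_{\alpha}} = \cup_{\alpha} \overline{V}_{\alpha}$ for a locally finite family, which is exactly what Lemma \ref{lmm:D-paracompact} provides; once this is available, the regularity step goes through, and the normality step is an almost word-for-word repetition.
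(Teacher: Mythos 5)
Your proof is correct and follows essentially the same route as the paper: a regularity step (point versus $D$-closed set) obtained from $D$-Hausdorffness plus a locally finite refinement, bootstrapped to full $D$-normality by repeating the argument, with Lemma \ref{lmm:D-paracompact} supplying the identity $\overline{\cup_{\alpha} V_{\alpha}} = \cup_{\alpha}\overline{V}_{\alpha}$ at the crucial moment. The only difference is cosmetic --- you form the separating neighborhood as the complement of $\overline{\cup_{\alpha \in I'} V_{\alpha}}$, while the paper shrinks a neighborhood of the point by deleting finitely many closures $\overline{W}_{\lambda_i}$ --- and if anything your bookkeeping (restricting to refinement elements that meet $F$, resp.\ $B$) is the cleaner of the two.
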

   \begin{proof}
    Let $x$ be an element of $X$.
    Let $F$ be a $D$-closed set of $X$ such that $x \not \in F$ and let $y$ be an element of $F$.
    Since $X$ is $D$-Hausdorff,
    there exists a $D$-open neighborhood $U_{x}$ and $U_{y}$ of $x$ and $y$,
    respectively,
    such that $U_{x} \cap U_{y}= \emptyset$.
    Then ${\sf U}=\{U_{y}|y\in F\} \cup \{X\setminus F\}$ is a $D$-open cover of $X$.
    Since $X$ is $D$-paracompact,
    there exists a locally finite refinement $\{W_{\lambda}\}_{\lambda\in \Lambda}$ of ${\sf U}$.
    Thus there are a $D$-open neighborhood $V$ of $x$ and finitely many $\lambda_{i} \ (1 \leq i \leq m)$ such that $W_{\lambda_{i}} \cap V \not = \emptyset$.
    Let $I = \{\lambda_{i}|x \not \in W_{\lambda_{i}}, 1 \leq i \leq m \}$ and $V_{0}=V \setminus \cup_{\lambda_{i} \in I} \overline{W}_{\lambda_{i}}$.
    Since $\cup_{\lambda_{i} \in I} \overline{W_{\lambda_{i}}}=\overline{\cup_{\lambda_{i} \in I} W_{\lambda_{i}}}$ holds,
    $V_{0}$ is a $D$-open neighborhood of $x$.
    Let $J=\{\lambda \in \Lambda|W_{\lambda} \cap F \not = \emptyset\}$ and $W=\cup_{\lambda \in J}W_{\lambda}$.
    Then $W$ is a $D$-open set of $X$ such that $F \subset W$.
    Then we have $V_{0} \cap W = \emptyset$.
    
    Next,
    let $A$ and $B$ be $D$-closed sets of $X$ such that $A \cap B = \emptyset$.
    By the above condition,
    for any $a \in A$,
    there exist a $D$-open neighborhood $U_{a}$ of $a$ and $D$-open set $W(a)$ of $X$ such that $B \subset W(a)$ and $U_{a} \cap W(a)= \emptyset$.
    Let ${\sf U}^{\prime}=\{U_{a}|a \in A\} \cup \{X \setminus A\}$.
    Then $\sf U^{\prime}$ is a $D$-open cover of $X$.
    Since $X$ is $D$-paracompact,
    there exists a locally finite refinement $\{L_{\lambda}\}_{\lambda \in \Lambda}$ of $\sf U^{\prime}$.
    For any $b \in B$,
    there exist a $D$-open neighborhood $M_{b}$ of $b$ and finitely many $\lambda_{i} \ (1 \leq i \leq m)$ such that $M_{b} \cap L_{\lambda_{i}} \not = \emptyset$.
    Let $I=\{\lambda_{i}|a \not \in L_{\lambda_{i}},1 \leq i \leq m \}$ and let $M= \cup_{b \in B} \tilde{M}_{b}$,
    where $\tilde{M}_{b}=M_{b} \setminus \cup_{\lambda_{i} \in I} \overline{L_{\lambda_{i}}}$ is a $D$-open neighborhood of $b$.
    Let $J=\{\lambda \in \Lambda|L_{\lambda} \cap A \not = \emptyset\}$ and let $L=\cup_{\lambda \in J} L_{\lambda} \supset A$.
    Then we have $L \cap M= \emptyset$.
   \end{proof}
%
%
%
%
\section{Diffeological subcartesian spaces}
 In this section we present the notion of diffeological subcartesian spaces.
 Moreover we prove that every diffeological subcartesian space has a partition of unity subordinate to arbitrary $D$-open over.
 %
 \begin{dfn}[diffeological subcartesian space]
 We say that a diffeological space $X$ is a diffeological subcarteisan space if the following conditions are satisfied.
  \begin{enumerate}
   \item
    $X$ is $D$-Hausdorff and $D$-paracompact.
   \item
    For each $x$ in $X$,
    there exists a diffeomorphism $\varphi_{x} \colon U \to U^{\prime}$,
    called a chart at $x$,
    from a $D$-open neighborhood $U$ of $x$ to a subspace $U^{\prime}$ of an Euclidean space $\mathbf{R}^{n_{x}}$,
    where $n_{x} \geq 0$ and $U^{\prime}$ need not be open in $\mathbf{R}^{n_{x}}$.
  \end{enumerate}
 It is clear that a diffeological subcartesian space is $D$-normal by Theorem \ref{thm:normal}.
 \end{dfn}
  \begin{lmm}[Shrink Lemma]
   If a diffeological space $X$ is $D$-Hausdorff and $D$-paracompact,
   then for every $D$-open cover $\{U_{\lambda}\}_{\lambda \in \Lambda}$ of $X$,
   there exists a locally finite $D$-open cover $\{W_{\lambda}\}_{\lambda \in \Lambda}$ such that  $\overline{W}_{\lambda} \subset U_{\lambda}$ holds for any $\lambda \in \Lambda$.
  \end{lmm}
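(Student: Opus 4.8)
The plan is to combine $D$-normality, which is available here by Theorem \ref{thm:normal}, with a classical transfinite shrinking argument, after first exploiting $D$-paracompactness to replace $\{U_\lambda\}_{\lambda\in\Lambda}$ by a locally finite cover indexed by $\Lambda$ itself.

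\emph{Step 1 (reduction to a locally finite cover with the same index set).} Since $X$ is $D$-paracompact, the $D$-open cover $\{U_\lambda\}_{\lambda\in\Lambda}$ admits a locally finite $D$-open refinement $\{V_\mu\}_{\mu\in M}$; choosing for each $\mu$ an index $r(\mu)\in\Lambda$ with $V_\mu\subset U_{r(\mu)}$ and setting $G_\lambda=\bigcup_{r(\mu)=\lambda}V_\mu$, I obtain a family $\{G_\lambda\}_{\lambda\in\Lambda}$ with $G_\lambda\subset U_\lambda$. Each $G_\lambda$ is $D$-open because an arbitrary union of $D$-open sets is $D$-open, the family still covers $X$, and it remains locally finite: a $D$-open neighborhood meeting only finitely many $V_\mu$ meets only those $G_\lambda$ whose index lies in the (finite) $r$-image of the relevant $\mu$. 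Thus it suffices to shrink a locally finite $D$-open cover.

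\emph{Step 2 (transfinite shrinking).} Well-order $\Lambda$ and construct $D$-open sets $W_\lambda$ by transfinite recursion so as to preserve the invariant that $\{W_\beta:\beta<\alpha\}\cup\{G_\gamma:\gamma\geq\alpha\}$ covers $X$, while keeping $\overline{W}_\lambda\subset G_\lambda$. At stage $\alpha$ put $C_\alpha=X\setminus\big((\bigcup_{\beta<\alpha}W_\beta)\cup(\bigcup_{\gamma>\alpha}G_\gamma)\big)$; this is $D$-closed since both unions are $D$-open, and the covering invariant forces $C_\alpha\subset G_\alpha$. Applying Proposition \ref{$D$-normal definition} to the $D$-closed set $C_\alpha$ and the $D$-open set $G_\alpha$ yields a $D$-open $W_\alpha$ with $C_\alpha\subset W_\alpha\subset\overline{W}_\alpha\subset G_\alpha$, which maintains the invariant at successor stages.

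\emph{Step 3 (limit stages and conclusion).} The reason for passing to a locally finite, hence point-finite, cover in Step 1 is exactly the limit stages: at a limit ordinal $\alpha$ and a point $x\in X$, let $\mu$ be the largest index with $x\in G_\mu$, which exists because $x$ lies in only finitely many $G_\lambda$; if no $G_\gamma$ with $\gamma\geq\alpha$ contains $x$ then $\mu<\alpha$, and the invariant at $\mu+1$, together with $x\notin G_\gamma$ for $\gamma>\mu$, places $x$ in some $W_\beta$ with $\beta\leq\mu<\alpha$, so the invariant survives the limit. The same computation at the end of the recursion shows $\{W_\lambda\}_{\lambda\in\Lambda}$ covers $X$. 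Finally $\overline{W}_\lambda\subset G_\lambda\subset U_\lambda$, and since $W_\lambda\subset G_\lambda$ the shrunk family inherits local finiteness from $\{G_\lambda\}$ (with Lemma \ref{lmm:D-paracompact} available should one wish to argue directly about $\bigcup_\lambda\overline{W}_\lambda$). The main obstacle is precisely the limit-stage verification of the covering invariant, which is why point-finiteness, and therefore the paracompactness reduction of Step 1, is indispensable.
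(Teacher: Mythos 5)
Your proof is correct, but it takes a genuinely different route from the paper's. The paper applies $D$-normality (Theorem \ref{thm:normal}) first and all at once: for each $\lambda$ it separates the $D$-closed set $\bigcap_{\lambda'\neq\lambda}(X\setminus U_{\lambda'})$ (the points covered by no $U_{\lambda'}$ other than $U_\lambda$) from $X\setminus U_\lambda$, producing $S_\lambda$ with $\overline{S}_\lambda\subset U_\lambda$; it then asserts that $\{S_\lambda\}$ is still a cover, takes a locally finite refinement by $D$-paracompactness, and regroups that refinement into $V_\lambda=\bigcup\{W_j : W_j\subset S_\lambda\}$. You invert the order: $D$-paracompactness first (your Step 1, regrouping a locally finite refinement via a choice function $r$ so the new cover is indexed by $\Lambda$), and normality only afterwards, applied one index at a time inside a transfinite recursion whose invariant explicitly certifies that the partially shrunk family still covers $X$, with point-finiteness rescuing the limit stages. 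The comparison is instructive: the covering property, which your invariant tracks carefully, is exactly the point the paper's argument treats most lightly --- from $\bigcap_{\lambda'\neq\lambda}(X\setminus U_{\lambda'})\subset S_\lambda$ one only obtains $S_\lambda\cup\bigcup_{\lambda'\neq\lambda}U_{\lambda'}=X$, which does not by itself show that a point lying in several of the $U_\lambda$ is caught by some $S_\lambda$; likewise a single $W_j$ may be contained in $S_\lambda$ for infinitely many $\lambda$, which strains the paper's concluding local-finiteness claim, whereas your choice-function regrouping assigns each refinement member to exactly one $G_\lambda$. What your approach buys, then, is airtight bookkeeping (this is the classical point-finite shrinking argument for normal spaces, transported verbatim to the $D$-topology, which is legitimate since $D$-open sets form an honest topology and $D$-closed sets are their complements); what it costs is the well-ordering and transfinite machinery, which the paper's shorter, more direct argument avoids. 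Both proofs consume exactly the same two hypotheses, $D$-normality via Theorem \ref{thm:normal} and $D$-paracompactness, together with Lemma \ref{lmm:D-paracompact} for closures of locally finite unions.
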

  \begin{proof}
   Let $\{U_{\lambda}\}_{\lambda \in \Lambda}$ be a $D$-open cover of $X$.
   Since $\cup_{\lambda \in \Lambda}U_{\lambda}=X$ holds,
   we have
    \[
     \cap_{\lambda  \in \Lambda} (X \setminus U_{\lambda})  
      = \cap_{\lambda \not = \lambda^{\prime} \in \Lambda}( X \setminus U_{\lambda^{\prime}}) \cap (X \setminus U_{\lambda})= \emptyset.
    \] 
   Since $X$ is $D$-normal by Theorem \ref{thm:normal},
   there exist two $D$-open sets $S_{\lambda}$ and $W$ of $X$ such that
    \[
     \cap_{\lambda \not = \lambda^{\prime} \in \Lambda} (X \setminus U_{\lambda^{\prime}} ) \subset S_{\lambda},\
     X \setminus U_{\lambda} \subset W \ {\rm and} \ S_{\lambda} \cap W= \emptyset.
    \]
   Thus we have $\overline{S}_{\lambda} \subset U_{\lambda}$ and $S_{\lambda} \cup (\cup_{\lambda \not = \lambda^{\prime} \in \Lambda} X \setminus U_{\lambda^{\prime}})=X$
   since $S_{\lambda} \subset X \setminus W \subset U_{\lambda}$ holds.
   Let ${\sf U}=\{S_{\lambda} | \lambda \in \Lambda\}$.
   Then $\sf U$ is a $D$-open cover of $X$.
   Since $X$ is $D$-paracompact,
   there exists a locally finite refinement $\{ W_{j} \}_{j \in J}$ of $\sf U$.
   For any $\lambda \in \Lambda$,
   let $J_{\lambda}=\{j_{\lambda} \in J|W_{j_{\lambda}} \subset S_{\lambda}\}$ and let $V_{\lambda}=\cup_{j_{\lambda} \in J_{\lambda}}W_{j_{\lambda}}$.
   Then we have
    \[
     \overline{V}_{\lambda}=\overline{\cup_{j_{\lambda} \in J_{\lambda}}W_{j_{\lambda}}} \subset
     \overline{S_{\lambda}} \subset U_{\lambda}.
    \]
   Then $\{V_{\lambda}|\lambda \in \Lambda\}$ is a locally finite $D$-open cover of $X$.
  \end{proof}
   Let $X$ be a diffeological space.
 If $f \colon X \to \mathbf{R}$ is a real-valued smooth map on $X$,
 the support of $f$,
 denoted by supp$f$,
 is the closure of the set of points where $f$ is nonzero:
  \[
   {\rm supp}f=\overline{ \{p \in X;f(p) \not =0\}}.
  \]
 Let ${\sf G}=\{A_{\lambda} \}_{\lambda \in \Lambda}$ be an arbitrary $D$-open cover of $X$.
 We say that a collection $\{\phi_{\lambda} \colon X \to \mathbf{R} \}_{\lambda \in \Lambda}$ is a partition of unity subordinate to $\sf G$ if the following conditions are satisfied:
  \begin{enumerate}
   \item 
    $0 \leq \phi_{\lambda}(x) \leq 1$ for all $\lambda \in \Lambda$ and all $x \in X$,
   \item 
    {\rm supp}$\phi_{\lambda} \subset A_{\lambda}$,
   \item 
    the set $\{\rm supp \phi_{\lambda}\}_{\lambda \in \Lambda}$ of supports is locally finite, and
   \item 
    $\sum_{\lambda \in \Lambda} \phi_{\lambda}(x)=1$ for all $x\in X$.
  \end{enumerate}
%
%
  \begin{thm}\label{thm:partition of unity}
   Let $X$ be a diffeological subcartesian space.
   Then
   for any $D$-open cover ${\sf U}$ of $X$,
   there exists a partition of unity subordinate to $\sf U$.
  \end{thm}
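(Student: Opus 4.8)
The plan is to imitate the classical manifold construction, with the Shrink Lemma and the chart axiom playing the role of coordinate balls, and with the $D$-topology handled with the care demanded by the Remark. First I would use $D$-paracompactness together with the chart axiom to replace $\mathsf U=\{A_{\mu}\}$ by a more convenient cover. Since the chart domains form a $D$-open cover and a $D$-open subset of a $D$-open set is again $D$-open (Lemma \ref{lmm:D-open subspace}), the family of all $D$-open sets that are simultaneously contained in a chart domain and in some $A_{\mu}$ is a $D$-open cover; by $D$-paracompactness it admits a locally finite $D$-open refinement $\{O_{\lambda}\}_{\lambda\in\Lambda}$. For each $\lambda$ I fix a chart $\varphi_{\lambda}\colon U_{\lambda}\to U'_{\lambda}\subseteq\mathbf R^{n_{\lambda}}$ with $O_{\lambda}\subseteq U_{\lambda}$ and an index $\mu(\lambda)$ with $O_{\lambda}\subseteq A_{\mu(\lambda)}$. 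Applying the Shrink Lemma twice (legitimate since $X$ is $D$-Hausdorff and $D$-paracompact) I would produce locally finite $D$-open covers $\{V_{\lambda}\}$ and $\{W_{\lambda}\}$ with $\overline{W}_{\lambda}\subseteq V_{\lambda}$ and $\overline{V}_{\lambda}\subseteq O_{\lambda}$ for every $\lambda$.

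The heart of the argument is the construction, for each $\lambda$, of a smooth function $\psi_{\lambda}\colon X\to[0,1]$ equal to $1$ on $\overline{W}_{\lambda}$ and with $\{\psi_{\lambda}\neq 0\}\subseteq V_{\lambda}$. Granting such a $\psi_{\lambda}$, its $X$-support is $D$-closed and contained in $\overline{V}_{\lambda}\subseteq O_{\lambda}\subseteq U_{\lambda}$. I would build $\psi_{\lambda}$ inside the chart: choose a smooth map $f_{\lambda}\colon U'_{\lambda}\to[0,1]$ (for the sub-diffeology on $U'_{\lambda}$) equal to $1$ on $\varphi_{\lambda}(\overline W_{\lambda})$ and vanishing off $\varphi_{\lambda}(V_{\lambda})$, set $\psi_{\lambda}=f_{\lambda}\circ\varphi_{\lambda}$ on $U_{\lambda}$, and extend by $0$ on $X\setminus U_{\lambda}$. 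The extension is smooth: for any plot $P\colon\mathcal O\to X$ the sets $P^{-1}(U_{\lambda})$ and $P^{-1}(X\setminus\operatorname{supp}\psi_{\lambda})$ are open, they cover $\mathcal O$ because $\operatorname{supp}\psi_{\lambda}\subseteq U_{\lambda}$, and $\psi_{\lambda}\circ P$ is smooth on each (on the first because $P$ there restricts to a plot of $U_{\lambda}$ by Lemma \ref{lmm:D-open subspace} while $f_{\lambda}$ and $\varphi_{\lambda}$ are smooth, on the second because it vanishes).

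The step I expect to be the genuine obstacle is the existence of $f_{\lambda}$ on the chart image. One cannot merely restrict a Euclidean bump function: by the Remark the $D$-topology $\tau_{1}(U'_{\lambda})$ of the sub-diffeology can be strictly finer than the Euclidean subspace topology $\tau_{2}(U'_{\lambda})$, so $\varphi_{\lambda}(\overline W_{\lambda})$ is only $\tau_{1}$-closed and $\varphi_{\lambda}(V_{\lambda})$ only $\tau_{1}$-open, whereas a pulled-back Euclidean bump controls only the coarser $\tau_{2}$-behaviour; thus a naive Euclidean bump may have $X$-support escaping $O_{\lambda}$. I would resolve this by working intrinsically on the $D$-closed set $\overline V_{\lambda}$, which sits inside the single chart $U_{\lambda}$: using that $X$ is $D$-normal (Theorem \ref{thm:normal}) to separate $\overline W_{\lambda}$ from $X\setminus V_{\lambda}$, and using that, by $D$-Hausdorffness, $D$-compact subsets are $D$-closed while $D$-compactness inside the $D$-open set $U_{\lambda}$ coincides with $D$-compactness in $X$ (Proposition \ref{prp:D-compact equivalent}), so that the supports produced are genuinely $D$-closed in $X$. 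Reconciling the Euclidean bump with the finer $\tau_{1}$-structure on $U'_{\lambda}$ is exactly the delicate point, and it is what yields $f_{\lambda}$ with $\{f_{\lambda}\neq 0\}\subseteq\varphi_{\lambda}(V_{\lambda})$.

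Finally I would assemble the partition. Because $\{O_{\lambda}\}$ is locally finite and $\operatorname{supp}\psi_{\lambda}\subseteq O_{\lambda}$, the sum $\sigma=\sum_{\lambda}\psi_{\lambda}$ is locally a finite sum of smooth functions, hence smooth, and $\sigma\geq 1$ everywhere since $\{W_{\lambda}\}$ covers $X$ and $\psi_{\lambda}\equiv 1$ on $\overline W_{\lambda}\supseteq W_{\lambda}$. Setting $\phi_{\lambda}=\psi_{\lambda}/\sigma$ gives smooth maps with $0\leq\phi_{\lambda}\leq 1$, with locally finite supports $\operatorname{supp}\phi_{\lambda}=\operatorname{supp}\psi_{\lambda}\subseteq O_{\lambda}$, and with $\sum_{\lambda}\phi_{\lambda}\equiv 1$. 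Grouping the $\phi_{\lambda}$ according to the assignment $\lambda\mapsto\mu(\lambda)$, by summing those sharing a common value, then produces a partition of unity indexed by $\mathsf U$ whose supports satisfy $\operatorname{supp}\subseteq A_{\mu}$, as required.
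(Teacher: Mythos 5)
Your overall strategy --- refine the cover by chart domains, shrink, build a bump function inside each chart, extend by zero, normalize by the locally finite sum, and regroup along $\lambda\mapsto\mu(\lambda)$ --- is the same as the paper's, and the outer layers of your argument are sound (the plotwise check that the zero-extension is smooth, the smoothness and positivity of $\sigma$, and the support bookkeeping, which indeed needs Lemma \ref{lmm:D-paracompact}). But there is a genuine gap exactly at the place you flag: you never construct $f_{\lambda}$. Saying that reconciling the Euclidean bump with the finer $\tau_{1}$-structure ``is exactly the delicate point, and it is what yields $f_{\lambda}$'' is an acknowledgment of the problem, not a solution to it. The tools you invoke cannot produce $f_{\lambda}$. $D$-normality (Theorem \ref{thm:normal}) separates $\overline{W}_{\lambda}$ from $X\setminus V_{\lambda}$ by $D$-\emph{open sets}; it gives no smooth function, and a smooth Urysohn-type separation is precisely what Theorem \ref{thm:partition of unity} (via Corollary \ref{crl:partition}) is supposed to deliver, so appealing to any such statement here would be circular. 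The $D$-compactness facts are likewise beside the point: $\overline{W}_{\lambda}$ has no reason to be $D$-compact (no local compactness is assumed anywhere in the paper), and compactness would not manufacture a smooth function in any case. As written, the proposal reduces the theorem to an unproved claim.

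For comparison, the paper discharges this step by an explicit (if itself debatable) assertion: after shrinking to $\overline{V}_{\lambda}\subset W_{\lambda}\subset U_{\lambda}$, it claims there exist a \emph{Euclidean} closed set $\tilde{V}_{\lambda}$ and a \emph{Euclidean} open set $\tilde{W}_{\lambda}$ in $\mathbf{R}^{n_{\lambda}}$ with $\varphi_{\lambda}^{-1}(\tilde{V}_{\lambda})=\overline{V}_{\lambda}$ and $\varphi_{\lambda}^{-1}(\tilde{W}_{\lambda})=W_{\lambda}$, and then pulls back a classical bump function $g_{\lambda}$ on $\mathbf{R}^{n_{\lambda}}$ (Lee, Prop.\ 2.19) with $g_{\lambda}\equiv 1$ on $\tilde{V}_{\lambda}$ and $\operatorname{supp}g_{\lambda}\subset\tilde{W}_{\lambda}$; all support control then takes place at the Euclidean level, where bump functions exist. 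You have correctly located the subtlety $\tau_{2}(U'_{\lambda})\subseteq\tau_{1}(U'_{\lambda})$ --- the paper's assertion is exactly where that issue is buried --- but identifying the obstacle is not overcoming it. To complete your argument you would have to either prove that the sets produced by the Shrink Lemma can be represented as pullbacks of Euclidean closed/open sets (the paper's route), or prove a smooth separation theorem for $D$-closed/$D$-open pairs in sub-diffeologies of $\mathbf{R}^{n}$; neither is done in the proposal.
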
  
  \begin{proof}
   Let ${\sf U}=\{U_{\lambda}\}_{\lambda \in \Lambda}$ be a $D$-open cover of $X$.
   Without loss of generality we may assume that the elements of $\sf U$ are chart domains.
   By Shrink Lemma,
   there exist locally finite $D$-open covers $\{V_{\lambda}\}$ and $\{W_{\lambda}\}$ of $X$ such that
    \[
     \overline{V}_{\lambda} \subset W_{\lambda} \subset U_{\lambda}.
    \]
   Let $\varphi_{\lambda} \colon U_{\lambda} \to U^{\prime}_{\lambda}$ be a chart for each $\lambda \in \Lambda$ and $U^{\prime}_{\lambda}$ is the subset of $\mathbf{R}^{n_{\lambda}}$.
   Then there are a closed subset $\tilde{V}_{\lambda}$ and an open subset $\tilde{W}_{\lambda}$ in $\mathbf{R}^{n_{\lambda}}$
   such that $\varphi^{-1}_{\lambda}(\tilde{V}_{\lambda})=\overline{V}_{\lambda}$ 
   and $\varphi^{-1}_{\lambda}(\tilde{W}_{\lambda})=W_{\lambda}$,
   respectively.
   By {\cite[2.19]{Lee}},
   there exists a smooth function
   $g_{\lambda} \colon \mathbf{R}^{n_{\lambda}} \to \mathbf{R}$
   such that supp$g_{\lambda} \subset \tilde{W}_{\lambda},\ g_{\lambda}|_{\tilde{V}_{\lambda}} \equiv 1$ and $g_{\lambda}|_{\mathbf{R}^{n_{\lambda}} \setminus \tilde{W}_{\lambda}} \equiv 0$.
%
%
  We define $f_{\lambda} \colon X \to \mathbf{R}$ by 
   \begin{eqnarray}
    f_{\lambda}(x)=\left\{
     \begin{array}{ll}
      g_{\lambda} \circ \varphi_{\lambda}(x) & x \in U_{\lambda} \nonumber \\
      0 & x \in X \setminus U_{\lambda} .\nonumber
     \end{array}
     \right.
    \end{eqnarray}
   Define new function $\phi_{\lambda} \colon X \to \mathbf{R}$ by
    \[
     \phi_{\lambda}(x)= \frac{f_{\lambda}(x)}{\sum_{\lambda^{\prime} \in \Lambda}f_{\lambda^{\prime}}(x)}.
    \]
   Then $\{\phi_{\lambda} \colon X \to \mathbf{R} | \lambda \in \Lambda\}$ is a partition of unity subordinate to $\sf U$.
  \end{proof}
  \begin{crl}\label{crl:partition}
    Let $A$ be a $D$-closed subset of a diffeological subcartesian space $X$.
    Let $U_{A}$ be a $D$-open subset of $X$ containing $A$.
    Then there exists a function $\varphi \colon X \to \mathbf{R}$ such that {\rm supp}$\varphi \subset U_{A}$ and $\varphi \equiv 1$ on $A$.
   \end{crl}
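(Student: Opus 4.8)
The plan is to reduce the statement to the partition-of-unity theorem (Theorem \ref{thm:partition of unity}) by choosing the most economical $D$-open cover of $X$ adapted to the pair $(A, U_A)$. Since $A$ is $D$-closed, its complement $X \setminus A$ is $D$-open; and because $A \subset U_A$ with $U_A$ likewise $D$-open, the two sets $U_A$ and $X \setminus A$ together cover $X$. Thus $\mathsf{U} = \{U_A,\, X \setminus A\}$ is a $D$-open cover of $X$, which I would index by $\lambda \in \{1,2\}$ with $A_1 = U_A$ and $A_2 = X \setminus A$.

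Next I would invoke Theorem \ref{thm:partition of unity}: as $X$ is a diffeological subcartesian space, there exists a partition of unity $\{\phi_1, \phi_2\}$ subordinate to $\mathsf{U}$. By definition this provides smooth functions $\phi_1,\phi_2 \colon X \to \mathbf{R}$ with $\mathrm{supp}\,\phi_1 \subset U_A$, $\mathrm{supp}\,\phi_2 \subset X \setminus A$, and $\phi_1(x) + \phi_2(x) = 1$ for every $x \in X$.

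Finally I would set $\varphi = \phi_1$. The support condition $\mathrm{supp}\,\varphi \subset U_A$ is immediate from condition (2) in the definition of a partition of unity. To verify that $\varphi \equiv 1$ on $A$, take any $x \in A$; since $\mathrm{supp}\,\phi_2 \subset X \setminus A$, the point $x$ lies outside $\mathrm{supp}\,\phi_2$, so $\phi_2(x) = 0$, and therefore $\varphi(x) = \phi_1(x) = 1 - \phi_2(x) = 1$.

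There is essentially no obstacle here, since the real content of the corollary is absorbed into Theorem \ref{thm:partition of unity}; the only things requiring verification are that $\{U_A,\, X \setminus A\}$ is a genuine $D$-open cover (using that $X \setminus A$ is $D$-open and that $A \subset U_A$) and that $\varphi$ is smooth as a map $X \to \mathbf{R}$. The latter is guaranteed because the functions $\phi_\lambda$ furnished by the theorem are smooth by construction, so no additional argument is needed.
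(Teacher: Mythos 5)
Your proposal is correct and follows essentially the same route as the paper: both take the $D$-open cover $\{U_A,\, X\setminus A\}$, apply Theorem \ref{thm:partition of unity}, and observe that the member of the partition subordinate to $X\setminus A$ vanishes on $A$, forcing the other member to equal $1$ there. Your write-up merely makes explicit the step $\varphi(x)=1-\phi_2(x)=1$ that the paper states in one line.
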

   \begin{proof}
    Since $\{U_{A},\, X \setminus A\}$ is a $D$-open cover of $X$,
    there exists a partition of unity subordinate $\{\varphi_{U_{A}},\, \varphi_{X\setminus A}\}$ to $\{U_{A},\, X \setminus A\}$ by Theorem \ref{thm:partition of unity}.
    Since $\varphi_{X \setminus A} \equiv 0$ on $A$,
    the function $\varphi_{U_{A}}$ has the required properties.
   \end{proof}
%
%
%
%
\section{de Rham cohomology of diffeological spaces}
 In this section we shall show that there exists a Mayer-Vietoris exact sequence with respect to de Rham cochomology of diffeological spaces.
 
 We first recall from \cite{Zem} the notion of differential forms on a diffeological space.
 A covariant antisymmetric $p$-tensor of $\mathbf{R}^{n}$ {\cite[6.11]{Zem}} is called a linear $p$-form of $\mathbf{R}^{n}$.
 The vector space of linear $p$-forms of $\mathbf{R}^{n}$ is denoted by $\Lambda^{p}( \mathbf{R}^{n})$.
  Let $U$ be an open set of $\mathbf{R}^{n}$.
  Let $d$ be a linear map from $C^{\infty}(U, \Lambda^{p}(\mathbf{R}^{n}))$ to $C^{\infty}(U,\Lambda^{p+1}(\mathbf{R}^{n}))$ defined by {\cite[6.24]{Zem}}.
  Then we have $d \circ d=0$.
 \begin{dfn}[{\cite[6.28]{Zem}}]
  Let $X$ be a diffeological space.
  We say that $\alpha$ is a differential $p$-form on $X$ if the following two conditions are fulfilled
   \begin{enumerate}
    \item
     For all integers $n$,
     for all $n$-plots $P \colon U \to X$,
     we have
      \[
       \alpha(P) \in C^{\infty}(U, \Lambda^{k}( \mathbf{R}^{n})).
      \]
    \item
     For all open sets $V$ of $\mathbf{R}^{m}$,
     $m \geq 0$,
     for all smooth parametrizations $F \colon V \to U$,
     we have
      \[
       \alpha(P \circ F)= F^{\ast}( \alpha(P)),
      \]
   \end{enumerate}
  where $F^{\ast}(\alpha(P))$ (cf.\ {\cite[6.22]{Zem}}) is defined by
   \[
     F^{\ast}( \alpha(P))(v)(x_{1}) \cdots (x_{p})= \alpha(P)(F(v))(D(F)(v)(x_{1})) \cdots (D(F)(v)(x_{p}))
   \]
  for all $v \in V$ and for all $k$-vectors $x_{1}, \cdots , x_{p} \in \mathbf{R}^{m}$.
 \end{dfn}
  The condition $\alpha(P \circ F)=F^{\ast}(\alpha(P))$ is called the smooth compatibility condition,
  The set of differential $p$-forms on $X$ is clearly a real vector space,
  and will be denoted by $\Omega^{p}(X)$.
  We define a linear map $d$ from $\Omega^{p}(X)$ to $\Omega^{p+1}(X)$ by
   \[
    (d \alpha)(P)=d( \alpha(P))
   \]
  for any $\alpha \in \Omega^{p}(X)$ and any plot $P \colon U \to X$ of $X$.
  Clearly,
  $d \circ d=0$ holds.
  Therefore we have a cochain complex $\{ \Omega^{p}(X),\, d\}$,
  called the de Rham complex.
  We define 
   \begin{eqnarray}
    Z^{p}(X) &=& {\rm Ker}[d \colon \Omega^{p}(X) \to \Omega^{p+1}(X)] \ {\rm and} \nonumber \\
    B^{p}(X) &=& {\rm Im}[d \colon \Omega^{p-1}(X) \to \Omega^{p}(X)]. \nonumber
   \end{eqnarray}
  Since $B^{p}(X)$ is a linear subspace of $Z^{p}(X)$,
  we can define the $p$-th de Rham cohomology group of $X$ to be the quotient vector space
   \[
    H^{p}_{dR}(X)=Z^{p}(X)/B^{p}(X).
   \]
 Let $f \colon X \to Y$ be a smooth map between diffeological spaces.
 We define 
  \[
   f^{\ast} \colon \Omega^{p}(Y) \to \Omega^{p}(X)
  \]
 by 
   $f^{\ast}( \alpha )(P)= \alpha (f \circ P)$
 for any $\alpha \in \Omega^{p}(Y)$ and any plot $P$ of $X$.
 Then we have $f^{\ast}(d \alpha)=d(f^{\ast} \alpha)$ {\cite[6.25]{Zem}}.
 Thus $f$ induces a homomorphism $f^{\ast}\colon H^{p}_{\rm dR}(Y) \to H^{p}_{\rm dR}(X)$.
 Let $A$ be a $D$-open set of $X$.
  For any $\alpha \in \Omega^{p}(X)$,
  we define $\alpha|_{A} = i^{\ast}_{A}(\alpha) \in \Omega^{p}(A)$,
  where $i_{A} \colon A \to X$ is the inclusion map.
  \begin{prp}\label{prp:coprod}
   Let $X$ be a diffeological space.
   Let $\{ X_{\lambda} \}$ be a collection of subspaces of $X$ such that $X= \coprod_{\lambda \in \Lambda} X_{\lambda}$.
   Then $H^{p}_{\rm dR}(X) \ and \ \prod_{\lambda \in \Lambda} H^{p}_{\rm dR}(X_{\lambda})$ are isomorphic for each $p$.
  \end{prp}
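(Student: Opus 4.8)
The plan is to reduce the statement to an isomorphism of cochain complexes and then invoke the fact that cohomology commutes with products of real vector spaces. Write $i_{\lambda} \colon X_{\lambda} \to X$ for the inclusion of the $\lambda$-th summand; each $i_{\lambda}$ is smooth and hence induces a pullback $i_{\lambda}^{\ast} \colon \Omega^{p}(X) \to \Omega^{p}(X_{\lambda})$. Assembling these, I would define
\[
 \Phi \colon \Omega^{p}(X) \longrightarrow \prod_{\lambda \in \Lambda} \Omega^{p}(X_{\lambda}), \qquad \Phi(\alpha)=(i_{\lambda}^{\ast} \alpha)_{\lambda \in \Lambda}.
\]
Since each $i_{\lambda}^{\ast}$ satisfies $i_{\lambda}^{\ast}(d\alpha)=d(i_{\lambda}^{\ast}\alpha)$, the map $\Phi$ is a morphism of cochain complexes onto the product complex equipped with the componentwise differential. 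The bulk of the work is to show that $\Phi$ is an isomorphism in each degree $p$.

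The key structural input is the behaviour of plots of a coproduct. Because $X=\coprod_{\lambda}X_{\lambda}$ in $\mathbf{Diff}$, each $X_{\lambda}$ is $D$-open in $X$, so for any plot $P \colon U \to X$ the sets $U_{\lambda}:=P^{-1}(X_{\lambda})$ form an open partition $U=\coprod_{\lambda}U_{\lambda}$, and the corestriction $P_{\lambda}:=P|_{U_{\lambda}}$ is a plot of $X_{\lambda}$ (its composite with $i_{\lambda}$ is a restriction of $P$, hence a plot of $X$). For injectivity I would argue that if $i_{\lambda}^{\ast}\alpha=0$ for all $\lambda$, then for any plot $P$ the smooth compatibility condition applied to the open inclusion $U_{\lambda}\hookrightarrow U$ gives $\alpha(P)|_{U_{\lambda}}=\alpha(i_{\lambda}\circ P_{\lambda})=(i_{\lambda}^{\ast}\alpha)(P_{\lambda})=0$; as the $U_{\lambda}$ cover $U$ and a smooth $\Lambda^{p}(\mathbf{R}^{n})$-valued function vanishing on an open cover vanishes, we get $\alpha(P)=0$, whence $\alpha=0$.

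For surjectivity, given a family $(\beta_{\lambda})_{\lambda}$ with $\beta_{\lambda}\in\Omega^{p}(X_{\lambda})$, I would define a candidate $\beta$ on $X$ by prescribing, for each plot $P \colon U \to X$, the value $\beta(P)|_{U_{\lambda}}=\beta_{\lambda}(P_{\lambda})$ on the piece $U_{\lambda}$ of the canonical partition above. The hard part — the step deserving the most care — is to check that this assignment is a genuine differential form, i.e.\ that the locally prescribed values fit together into a smooth $\beta(P)\in C^{\infty}(U,\Lambda^{p}(\mathbf{R}^{n}))$ and that the smooth compatibility condition $\beta(P\circ F)=F^{\ast}(\beta(P))$ holds. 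Well-definedness is immediate here because the $U_{\lambda}$ are pairwise disjoint, so no overlap conditions arise; smooth compatibility follows by decomposing the source of a smooth map $F \colon V \to U$ according to $V=\coprod_{\lambda}F^{-1}(U_{\lambda})$ and applying the smooth compatibility of each $\beta_{\lambda}$ on the corresponding piece. A short computation using that $i_{\lambda}\circ Q$ has image in $X_{\lambda}$ then shows $i_{\lambda}^{\ast}\beta=\beta_{\lambda}$, so $\Phi(\beta)=(\beta_{\lambda})_{\lambda}$ and $\Phi$ is bijective, hence an isomorphism of cochain complexes.

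Finally, since kernels, images, and quotients of real vector spaces are formed componentwise in a product and products are exact in $\mathbf{Vect}_{\mathbf{R}}$, taking cohomology commutes with products, giving $H^{p}\bigl(\prod_{\lambda}\Omega^{\bullet}(X_{\lambda})\bigr)\cong\prod_{\lambda}H^{p}_{\rm dR}(X_{\lambda})$. Composing with the isomorphism on cohomology induced by $\Phi$ yields $H^{p}_{\rm dR}(X)\cong\prod_{\lambda\in\Lambda}H^{p}_{\rm dR}(X_{\lambda})$ for each $p$, as required.
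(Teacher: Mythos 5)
Your proposal is correct and follows essentially the same route as the paper: both define the cochain map $\alpha \mapsto (i_{\lambda}^{\ast}\alpha)_{\lambda}$ into the product complex, prove it is bijective degreewise using $D$-openness and disjointness of the summands, and conclude on cohomology. The only difference is that you spell out the plot-level verification (smoothness and compatibility of the glued form) and the exactness of products in $\mathbf{Vect}_{\mathbf{R}}$, which the paper leaves implicit.
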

  \begin{proof}
 It is clear that for each $\lambda \in \Lambda$,
 $X_{\lambda}$ is $D$-open in $X$ by the definition of coproduct spaces.
 We define a homomorphism 
  \[
   \prod i_{\lambda}^{\ast} \colon \Omega^{p}(X) \to \prod_{\lambda \in \Lambda} \Omega^{p}(X_{\lambda})
  \]
 by 
 $\prod i^{\ast}_{\lambda}(\omega)=(i^{\ast}_{\lambda}(\omega))_{\lambda \in \Lambda}$
 for any $\omega \in \Omega^{p}(X)$,
 where $i_{\lambda}^{\ast} \colon \Omega^{p}(X) \to \Omega^{p}(X_{\lambda})$ is the cochain map induced by the inclusion $i_{\lambda}^{\ast} \colon X_{\lambda} \to X$.
 Let $\omega$ be an element of $\mbox{Ker}(\prod i^{\ast}_{\lambda})$.
 Since $(\prod i^{\ast}_{\lambda})(\omega)=((i^{\ast}_{\lambda}(\omega))_{\lambda \in \Lambda}=(\omega|_{X_{\lambda}})_{\lambda \in \Lambda}=0$,
 we have $\omega=0$.
 Let $(\tau_{\lambda})_{\lambda \in \Lambda}$ be an element of $\prod_{\lambda \in \Lambda} \Omega^{p}(X_{\lambda})$.
 We define $\tau \in \Omega^{p}(X)$ by for any $\lambda \in \Lambda,\ \tau|_{X_{\lambda}}=\tau_{\lambda}$.
 Then we have $(\prod i^{\ast}_{\lambda})(\tau)=(\tau_{\lambda})_{\lambda \in \Lambda}$
 since $X_{\lambda} \cap X_{\lambda^{\prime}}$ is empty for each $\lambda \not = \lambda^{\prime}$.
  \end{proof}

Let $A$ and $B$ be two $D$-open sets of $X$ such that $X=A \cup B$ holds.
Then we have a diagram:
 \begin{eqnarray}
  \begin{CD}
   A \cap B 
   @>i_{1}>>
   A \\
   @V i_{2} VV
   @V j_{1} VV
   \\
   B 
   @>j_{2}>>
   X=A \cup B
  \end{CD}
 \end{eqnarray}
 consisting of inclusions.
 Now, 
 consider the following sequence:
\begin{eqnarray}
 0
 \to
 \Omega^{p}(X)
 \xrightarrow{j_{1}^{\ast} \oplus j_{2}^{\ast}}
 \Omega^{p}(A) \oplus \Omega^{p}(B)
 \xrightarrow{i_{1}^{\ast}-i_{2}^{\ast}}
 \Omega^{p}(A \cap B)
 \to
 0,
\end{eqnarray}
 where
\[
 (j_{1}^{\ast} \oplus j_{2}^{\ast})(\omega)=(j_{1}^{\ast}(\omega),j_{2}^{\ast}(\omega)),\
 (i_{1}^{\ast}-i_{2}^{\ast})(\omega)=i_{1}^{\ast}(\omega)-i_{2}^{\ast}(\omega).
\]
 Then we have the following.
 %
 %
 \begin{thm}[Mayer-Vietoris exact sequence]\label{thm:Mayer-Vietoris 1}
  Let $X$ be a diffeological space.
  Let $\{A,\,B\}$ be a $D$-open cover of $X$.
  If there exists a partition of unity $\varphi_{i} \colon X \to \mathbf{R} \ (i =A,\,B)$ subordinate to $\{A,\,B\}$,
  then we have a long exact sequence:
  \[
   \to H^{p}_{\rm dR}(X) \xrightarrow{j^{\ast}_{1}\oplus j^{\ast}_{2}}  H^{p}_{\rm dR}(A)\oplus H^{p}_{\rm dR}(B) \xrightarrow{i_{1}^{\ast}-i_{2}^{\ast}}  H^{p}_{\rm dR}(A \cap B) 
   \xrightarrow{\delta} H^{p+1}_{\rm dR}(X) \to \cdots.
  \] 
 \end{thm}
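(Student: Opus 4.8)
The plan is to realize the displayed long exact sequence as the long exact cohomology sequence associated to the short exact sequence of cochain complexes
\[
0 \to \Omega^{\bullet}(X) \xrightarrow{j_1^{\ast} \oplus j_2^{\ast}} \Omega^{\bullet}(A) \oplus \Omega^{\bullet}(B) \xrightarrow{i_1^{\ast} - i_2^{\ast}} \Omega^{\bullet}(A \cap B) \to 0,
\]
so that once this sequence of complexes is shown to be exact in every degree $p$, the connecting homomorphism $\delta$ and the exactness of the long sequence follow from the standard zig-zag lemma of homological algebra. Thus the whole content of the theorem reduces to checking that, for each $p$, the three-term sequence of vector spaces is short exact, with the partition of unity entering only in the surjectivity step.

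First I would check exactness at $\Omega^p(X)$. If $\omega \in \Omega^p(X)$ satisfies $j_1^{\ast}(\omega)=\omega|_A = 0$ and $j_2^{\ast}(\omega)=\omega|_B = 0$, then for any plot $P \colon U \to X$ the sets $P^{-1}(A)$ and $P^{-1}(B)$ form an open cover of $U$; restricting $P$ to each gives a plot of $A$ resp.\ $B$ (since $A$ and $B$ are $D$-open, Lemma \ref{lmm:D-open subspace} guarantees these restrictions are plots of the subspaces), and the hypotheses force $\omega(P)$ to vanish on each member of the cover, whence $\omega(P)=0$. As $P$ is arbitrary, $\omega=0$, giving injectivity. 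For exactness in the middle, the inclusion $\operatorname{Im}(j_1^{\ast}\oplus j_2^{\ast}) \subseteq \operatorname{Ker}(i_1^{\ast}-i_2^{\ast})$ is immediate because $i_1^{\ast}j_1^{\ast}=i_2^{\ast}j_2^{\ast}$ (both equal restriction to $A\cap B$). For the reverse inclusion I would use the locality of differential forms: given $(\alpha,\beta)$ with $\alpha|_{A\cap B}=\beta|_{A\cap B}$, one defines $\omega \in \Omega^p(X)$ by declaring, for each plot $P \colon U \to X$, the form $\omega(P)$ to be $\alpha(P|_{P^{-1}(A)})$ on $P^{-1}(A)$ and $\beta(P|_{P^{-1}(B)})$ on $P^{-1}(B)$; these agree on $P^{-1}(A\cap B)$ by hypothesis and so glue to a smooth form on $U$, and the resulting assignment inherits the smooth compatibility condition, giving $\omega|_A=\alpha$ and $\omega|_B=\beta$.

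The main obstacle, and the only place the partition of unity enters, is surjectivity of $i_1^{\ast}-i_2^{\ast}$. Given $\gamma \in \Omega^p(A\cap B)$, I would set $\alpha := \varphi_B\gamma$ on $A\cap B$, extended by zero to all of $A$, and $\beta := -\varphi_A\gamma$ on $A\cap B$, extended by zero to all of $B$. The delicate point is that these are genuine forms on $A$ and $B$: since $\operatorname{supp}\varphi_B \cap A \subset A\cap B$, the form $\varphi_B\gamma$ vanishes near every point of $A$ lying outside $A\cap B$, so the two local definitions (namely $\varphi_B\gamma$ on $A\cap B$ and $0$ on $A \setminus \operatorname{supp}\varphi_B$) agree on their overlap and glue, by the same locality argument as above, to $\alpha \in \Omega^p(A)$; similarly for $\beta$. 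Once this extension-by-zero is justified, the identity $\varphi_A+\varphi_B=1$ yields
\[
(i_1^{\ast}-i_2^{\ast})(\alpha,\beta) = \varphi_B\gamma + \varphi_A\gamma = (\varphi_A+\varphi_B)\gamma = \gamma
\]
on $A\cap B$, establishing surjectivity and completing the proof that the sequence of complexes is short exact. I expect the verification that extension by zero produces a well-defined diffeological form to be the one genuinely nontrivial step, everything else being formal consequences of locality and the snake lemma.
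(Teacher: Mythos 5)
Your proposal is correct and follows essentially the same route as the paper: reduce to exactness of the short sequence of complexes $0 \to \Omega^{p}(X) \to \Omega^{p}(A)\oplus\Omega^{p}(B) \to \Omega^{p}(A\cap B) \to 0$, prove injectivity and middle exactness by locality/gluing of forms over the $D$-open cover, and obtain surjectivity of $i_{1}^{\ast}-i_{2}^{\ast}$ by multiplying by the partition of unity and extending by zero (using $\operatorname{supp}\varphi_{B}\cap A \subset A\cap B$), then invoke the zig-zag lemma. Your write-up is in fact slightly more explicit than the paper's at the injectivity and well-definedness steps, but the argument is the same.
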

 \begin{proof}
  To see existence of the Mayer-Vietoris exact sequence,
 it suffices to show that the sequence (2) is exact for each $p$.
 We shall show that $j_{1}^{\ast} \oplus j_{2}^{\ast}$ is injective.
 Let $\alpha$ be an element of Ker$(j_{1}^{\ast} \oplus j_{2}^{\ast})$.
 Since $\alpha|_{A}=0=\alpha|_{B}$,
 we have $\alpha=0$.
%
 Let $(j_{1}^{\ast} \oplus j_{2}^{\ast})(\omega)$ be an element of Im($j_{1}^{\ast} \oplus j_{2}^{\ast}$).
 Since $i^{\ast}_{1}j^{\ast}_{1}(\alpha)=\alpha|_{A\cap B}=i_{2}^{\ast} j_{2}^{\ast}(\alpha)$,
 we have
 \begin{eqnarray}
   (i_{1}^{\ast}-i_{2}^{\ast})\circ (j_{1}^{\ast} \oplus j_{2}^{\ast})(\omega)
   =i_{1}^{\ast}j_{1}^{\ast}(\omega)-i_{2}^{\ast}j_{2}^{\ast}(\omega)
   = 0. \nonumber
  \end{eqnarray}
 Thus Im$(j_{1}^{\ast} \oplus j_{2}^{\ast}) \subset {\rm Ker}(i_{1}^{\ast}-i_{2}^{\ast})$. 
 Let $(\alpha, \beta)$ be an element of ${\rm Ker}(i_{1}^{\ast}-i_{2}^{\ast})$.
 We define $\omega \in \Omega^{p}(X)$ by 
  \begin{eqnarray}
   \omega=\left\{
    \begin{array}{ll}
     \alpha & {\rm on} \ A \\ \\
     \beta & {\rm on} \ B, \nonumber\\ 
    \end{array}
   \right.
  \end{eqnarray}
  that is,
  for any plot $P \colon U \to X$of $X$,
  $\omega(P)=\alpha(P|_{P^{-1}(A)})$ on $P^{-1}(A)$ and $\omega(P)=\beta(P|_{P^{-1}(B)})$ on $P^{-1}(B)$.
 Then $\omega$ is well-defined since $\alpha|_{A \cap B}=\beta|_{A \cap B}$ holds.
 Clearly,
 we have
 $(j_{1}^{\ast} \oplus j_{2}^{\ast})(\omega)=(\alpha,\beta)$.
 Thus Ker$(i_{1}^{\ast}-i_{2}^{\ast}) \subset {\rm Im}(j_{1}^{\ast} \oplus j_{2}^{\ast})$.
 We shall show that $(i_{1}^{\ast}-i^{\ast}_{2})$ is surjective.
 Let $\sigma$ be an element of $\Omega^{p}(A\cap B)$.
 Since supp$\varphi_{B} \cap A \subset A \cap B$,
 we can define $\eta_{A} \in \Omega^{p}(A)$ by 
  \begin{eqnarray}
   \eta_{A}=
    \left\{
     \begin{array}{ll}
      \varphi_{B} \times \sigma & on \ A \cap B \nonumber \\
      0 & on \ A \setminus {\rm supp}\varphi_{B}, \nonumber
     \end{array}
    \right.
  \end{eqnarray}
 that is,
 for any plot $P \colon V \to A$ of $A$,
 $\eta_{A}(P)=(\varphi_{B} \circ P|_{P^{-1}(A \cap B)}) \times \sigma(P|_{P^{-1}(A\cap B)})$ on $ P^{-1}(A \cap B)$ and $\eta_{A}(P)=0$ on $ P^{-1}(A \setminus {\rm supp}\varphi_{B})$.
 Then $\eta_{A}$ satisfies the smooth compatibility condition $F^{\ast}(\eta_{A}(P))=\eta_{A}(P \circ F)$
 for every smooth map $F$
 from an open set of Euclidean spaces to the domain of $P$.
 Similarly,
 we define $\eta_{B} \in \Omega^{p}(B)$ by 
  \begin{eqnarray}
   \eta_{B}=
    \left\{
     \begin{array}{ll}
      -\varphi_{A} \times \sigma & on \ A \cap B \nonumber \\
      0 & on \ A \setminus {\rm supp}\varphi_{A}. \nonumber
     \end{array}
    \right.
  \end{eqnarray}
 Then we have $(i_{1}^{\ast}-i_{2}^{\ast})(\eta_{A},\eta_{B})=\varphi_{B} \times \sigma + \varphi_{A} \times \sigma=(\varphi_{B}+\varphi_{A}) \times \sigma=\sigma$.
 Therefore $(i_{1}^{\ast}-i_{2}^{\ast})$ is surjective.
 \end{proof}
%
%
%
%
%
%
%
%
\section{de Rham cohomology with compact support}\label{section:caomactly support}
 In this section we define the de Rham cohomology of diffeological spaces with compact support.
 We shall show that there exists a Mayer-Vietoris exact sequence for de Rham cohomology with compact support.
  \begin{dfn}\label{def:definition of compact support}
   Let $X$ be a diffeological space. 
   Let $\alpha$ be a differential $p$-form on $X$.
   An element $x$ in $X$ is a support element of $\alpha$ if and only if for any plot $P \colon U \to X$ of $X$ and any $r \in U$ such that $P(r)=x$,
   $\alpha(P)(r)$ is nonzero.
   We call the closure of the set of support elements the support of $\alpha$, 
   and it will be denoted by supp$\alpha$:
    \[
     {\rm supp}\alpha = \overline{ \{ x\in X | \forall P \colon U \to X,\ \forall r \in U \ {\rm s.t} \ P(r)=x,\ \alpha(P)(r) \not = 0 \}}.
    \]
   We say that $\alpha$ is a compactly supported $p$-form on $X$ if the support of $\alpha$ is $D$-compact in $X$.
   The set of compactly supported $p$-forms of $X$ is denoted by $\Omega^{p}_{c}(X)$.
  \end{dfn}
 It is clear that $\{ \Omega^{p}_{c}(X),\,d\}$ is the subcomplex of the de Rham complex $\{\Omega^{p}(X),\, d\}$.
 We define the $p$-th de Rham cohomology group of $X$ with compact support to be the quotient space
  \[
   H^{p}_{c}(X)=Z^{p}_{c}(X)/B^{p}_{c}(X),
  \]  
  where,
   \begin{eqnarray}
    Z^{p}_{c}(X)&=&{\rm Ker}[d \colon \Omega^{p}_{c}(X) \to \Omega^{p+1}_{c}(X)] \ \mbox{and} \nonumber \\
    B^{p}_{c}(X)&=&{\rm Im}[d \colon \Omega^{p-1}_{c}(X) \to \Omega^{p}_{c}(X)]. \nonumber
   \end{eqnarray}
  We say that a smooth map $f \colon X \to Y$ is a proper map if for any $D$-compact set $C$ in $Y$,
  the inverse image $f^{-1}(C)$ is $D$-compact in $X$.
  Now for any element $\alpha \in \Omega^{p}_{c}(Y)$,
  $f^{\ast}(\alpha)$ is an element of $\Omega^{p}_{c}(X)$ since $f^{-1}({\rm supp} \alpha)$ is $D$-compact in $X$.
  Therefore $f$ induces a homomorphism
   $
    f^{\ast} \colon H^{p}_{c}(Y) \to H^{p}_{c}(X).
   $
  If $X$ is $D$-compact,
  then we have $H^{p}_{c}(X)=H^{p}_{\rm dR}(X)$ since $\Omega^{p}_{c}(X)=\Omega^{p}(X)$ holds.
  
 Let $X$ be a $D$-Hausdorff space.
 Let $A$ be a $D$-open set of $X$.
 If a subset $C$ of $A$ is $D$-compact in $A$,
 then it is $D$-compact in $X$ by Proposition \ref{prp:D-compact equivalent}.
 %
 Hence the inclusion $i \colon A \to X$ induces a map
  \[
   i_{\ast} \colon \Omega^{p}_{c}(A) \to \Omega^{p}_{c}(X)
  \]
 defined by for any $\alpha \in \Omega^{p}_{c}(A)$,
  \begin{eqnarray}
   i_{\ast}(\alpha)=
   \left\{
    \begin{array}{ll}
     \alpha & on \ A \\
     0 & on \ X \setminus {\rm supp}\alpha, \nonumber 
    \end{array}
   \right.
  \end{eqnarray}
 that is,
 for any plot $P \colon U \to X$ of $X$,
 $i_{\ast}(\alpha)(P)=\alpha(P|_{P^{-1}(A)})$ on $P^{-1}(A)$ and $i_{\ast}(\alpha)(P)=0$ on $P^{-1}(X \setminus {\rm supp}\alpha)$. 
%
 Then it is clear that $i_{\ast}$ is injective and $i^{\ast} \circ i_{\ast}(\alpha)=\alpha$ holds.
 Moreover we get a homomomorphism $i_{\ast} \colon H^{p}_{c}(A) \to H^{p}_{c}(X)$.
%
%
%
%
 \begin{prp}
  Let $X$ be a $D$-Hausdorff space.
  Let $\{X_{\lambda}\}$ be a collection of subspaces of $X$ such that $X$ can be written as a coproduct $X= \coprod_{\lambda \in \Lambda}X_{\lambda}$.
  Then $\oplus_{\lambda \in \Lambda}H^{p}_{c}(X_{\lambda})$ and $H^{p}_{c}(X)$ are isomorphic for each $p$.
 \end{prp}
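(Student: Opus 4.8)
The plan is to prove this as the compactly supported analogue of Proposition \ref{prp:coprod}, but working with the pushforward maps $i_{\lambda \ast}$ on compactly supported forms rather than with the restriction (pullback) maps, and to establish an isomorphism already at the level of cochain complexes. The crucial new phenomenon, which explains why a direct sum $\oplus$ replaces the product $\prod$ of Proposition \ref{prp:coprod}, is that a compactly supported form on $X$ can be nonzero on only finitely many of the pieces $X_{\lambda}$: its support is $D$-compact, and the $\{X_{\lambda}\}$ form a $D$-open cover, so finitely many pieces already engulf the support.

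First I would recall that, since $X=\coprod_{\lambda} X_{\lambda}$, each $X_{\lambda}$ is $D$-open in $X$, and that $X$ being $D$-Hausdorff makes the injective pushforward $i_{\lambda \ast} \colon \Omega^{p}_{c}(X_{\lambda}) \to \Omega^{p}_{c}(X)$ constructed just above this proposition available, with $i_{\lambda}^{\ast} \circ i_{\lambda \ast} = \mathrm{id}$. I would then define
\[
 \Phi \colon \bigoplus_{\lambda \in \Lambda} \Omega^{p}_{c}(X_{\lambda}) \to \Omega^{p}_{c}(X), \qquad \Phi\bigl((\alpha_{\lambda})_{\lambda \in \Lambda}\bigr) = \sum_{\lambda \in \Lambda} i_{\lambda \ast}(\alpha_{\lambda}),
\]
observing that the sum is finite by the definition of the direct sum and that, because each $X_{\lambda}$ is $D$-open and $d$ is a local operator commuting with each $i_{\lambda \ast}$, the map $\Phi$ is a morphism of cochain complexes.

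For injectivity, I would apply $i_{\mu}^{\ast}$ to the relation $\Phi\bigl((\alpha_{\lambda})\bigr)=0$ and use $i_{\mu}^{\ast} \circ i_{\lambda \ast}=0$ for $\lambda \neq \mu$ (their supports being disjoint) together with $i_{\mu}^{\ast} \circ i_{\mu \ast}=\mathrm{id}$, yielding $\alpha_{\mu}=0$ for every $\mu$. For surjectivity, given $\omega \in \Omega^{p}_{c}(X)$, I would note that $\{X_{\lambda}\}$ is a $D$-open cover of the $D$-compact set $\mathrm{supp}\,\omega$, so by $D$-compactness only finitely many pieces $X_{\lambda_{1}},\dots,X_{\lambda_{n}}$ meet $\mathrm{supp}\,\omega$. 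Setting $\alpha_{\lambda}=i_{\lambda}^{\ast}(\omega)$, each $\alpha_{\lambda}$ is compactly supported in $X_{\lambda}$ (its support lying in $\mathrm{supp}\,\omega \cap X_{\lambda}$), and $\alpha_{\lambda}=0$ unless $\lambda \in \{\lambda_{1},\dots,\lambda_{n}\}$; hence $(\alpha_{\lambda})$ lies in the direct sum and $\Phi\bigl((\alpha_{\lambda})\bigr)=\omega$ because the $X_{\lambda}$ partition $X$.

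Having shown $\Phi$ to be an isomorphism of cochain complexes, I would finish by passing to cohomology: since the formation of a direct sum is exact and hence commutes with taking homology of complexes, $\Phi$ induces the desired isomorphism $\bigoplus_{\lambda \in \Lambda} H^{p}_{c}(X_{\lambda}) \cong H^{p}_{c}(X)$. I expect the main obstacle to be the surjectivity step, and precisely the verification that $D$-compactness of $\mathrm{supp}\,\omega$ forces it to meet only finitely many $X_{\lambda}$; this is exactly the point at which the hypotheses that the $X_{\lambda}$ form a $D$-open cover and that $X$ is $D$-Hausdorff (so that the $i_{\lambda \ast}$ exist and supports transport correctly) are genuinely needed.
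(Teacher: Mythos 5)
Your proposal is correct and follows essentially the same route as the paper: the same chain-level map $\sum_{\lambda} i_{\lambda\ast}$, injectivity via $i_{\mu}^{\ast}\circ i_{\lambda\ast}$ (the paper phrases this by evaluating on plots of each $X_{\mu}$, which amounts to the same thing), and surjectivity via a finite subcover of the $D$-compact support followed by restriction to the pieces. The only cosmetic difference is that you state explicitly the passage from the cochain isomorphism to cohomology, which the paper leaves implicit.
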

 \begin{proof}
  Let $\sum i_{\lambda \ast} \colon \oplus_{\lambda \in \Lambda} \Omega^{p}_{c}(X_{\lambda}) \to \Omega^{p}_{c}(X)$
  be the map defined by
   \[
    \left( \sum i_{\lambda \ast}\right)((\omega_{\lambda})_{\lambda \in \Lambda})= \sum_{\lambda \in \Lambda} i_{\lambda \ast}( \omega_{\lambda}),
   \]
   where $i_{\lambda \ast} \colon \Omega^{p}_{c}(X_{\lambda}) \to \Omega^{p}_{c}(X)$ is the chain map induced by the inclusion $i_{\lambda}$.
   Let $(\omega_{\lambda})_{\lambda \in \Lambda}$ be an element of $\mbox{Ker}( \sum i_{\lambda \ast})$.
   For each $\lambda \in \Lambda$ and
   any plot $P \colon U \to X_{\lambda}$ of $X_{\lambda}$,
   it is also a plot of $X$.
   Then we have
    \[
     \left(\sum i_{\lambda \ast}\right)((\omega_{\lambda})_{\lambda \in \Lambda})(P)=
     \sum_{\lambda \in \Lambda} i_{\lambda \ast}(\omega_{\lambda})(P)=\omega_{\lambda}(P)=0.
    \]
   Thus $\sum i_{\lambda \ast}$ is injective since $\omega_{\lambda}=0$ for each $\lambda \in \Lambda$.
    Next we shall show that $\sum i_{\lambda \ast}$ is surjective.
    Let $\tau$ be an element of $\Omega^{p}_{c}(X)$.
    For each $\lambda \in \Lambda$,
    $X_{\lambda}$ is $D$-open in $X$ by the properties of coproduct diffeology.
    Since supp$\tau$ is $D$-compact in $X$ and $\{X_{\lambda} \}_{\lambda \in \Lambda}$ is a cover of supp$\tau$,
    there exists a finite cover $\{X_{\lambda_{i}}\}_{1 \leq i \leq m}$ of supp$\tau$.
    We define $(\tau_{j})_{j \in \Lambda} \in \oplus_{\lambda \in \Lambda} \Omega^{p}_{c}(X_{j})$ by 
     \[
      (\tau_{j})_{j \in \Lambda}=
       \left\{
        \begin{array}{ll}
         \tau|_{X_{\lambda_{i}}} & j=\lambda_{i} \\
         0 & j \not =\lambda_{i}.
        \end{array}
       \right.
     \]
    Then $(\tau_{j})_{j \in \Lambda}$ is well-defined since $X_{\lambda} \cap X_{\lambda^{\prime}}$ is empty for each $\lambda$ and $\lambda^{\prime}$ in $\Lambda$ such that
    $\lambda^{\prime} \not = \lambda$.
    Clearly,
    we have $(\sum i_{\lambda \ast})((\tau_{j})_{j \in \Lambda})=\tau$.
 \end{proof}
 \begin{thm}[Mayer-Vietoris exact sequence]\label{thm:Mayer-Vietoris 2}
  Let $X$ be a $D$-Hausdorff space.
  Let $\{A,\,B\}$ be a $D$-open cover of $X$.
  If there exists a partition of unity $\varphi_{i} \colon X \to \mathbf{R} \ (i=A, \,B)$ subordinate to $\{A,\,B\}$,
  then we have a long exact sequence:
   \[
    \to H^{p}_{c}(A \cap B) \xrightarrow{ i_{1 \ast} \oplus i_{2 \ast}}  H^{p}_{c}(A) \oplus H^{p}_{c}(B) \xrightarrow{ j_{1 \ast}-j_{2 \ast}} H^{p}_{c}(X)
    \xrightarrow{\delta}  H^{p+1}_{c}(A \cap B) \to \cdots.
   \] 
 \end{thm}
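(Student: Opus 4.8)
The plan is to mirror the proof of Theorem \ref{thm:Mayer-Vietoris 1}, but with the restriction maps replaced by the pushforward (extension-by-zero) maps $i_{k\ast}$ and $j_{k\ast}$ introduced in this section. Since $X$ is $D$-Hausdorff, Proposition \ref{prp:D-compact equivalent} guarantees that a $D$-compact subset of one of the $D$-open sets $A$, $B$, $A\cap B$ remains $D$-compact in $X$, so each of these pushforwards is a well-defined injection on compactly supported forms; moreover it commutes with $d$, hence is a cochain map. It therefore suffices to establish the short exact sequence of cochain complexes
\[
0 \to \Omega^{p}_{c}(A \cap B) \xrightarrow{i_{1\ast} \oplus i_{2\ast}} \Omega^{p}_{c}(A) \oplus \Omega^{p}_{c}(B) \xrightarrow{j_{1\ast} - j_{2\ast}} \Omega^{p}_{c}(X) \to 0
\]
and then to apply the standard zig-zag lemma; the connecting homomorphism $\delta$ is the one it produces.

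First I would check exactness at the two ends. Injectivity of $i_{1\ast}\oplus i_{2\ast}$ is immediate from the injectivity of $i_{1\ast}$ already recorded above. For surjectivity of $j_{1\ast}-j_{2\ast}$ I would use the partition of unity $\{\varphi_A,\varphi_B\}$: given $\tau\in\Omega^{p}_{c}(X)$, the forms $\varphi_A\tau$ and $\varphi_B\tau$ have supports contained in ${\rm supp}\,\varphi_A\subset A$ and ${\rm supp}\,\varphi_B\subset B$ respectively, and these supports are $D$-closed subsets of the $D$-compact set ${\rm supp}\,\tau$, hence $D$-compact by Proposition \ref{prp: subcompact}. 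Setting $\alpha=(\varphi_A\tau)|_A\in\Omega^{p}_{c}(A)$ and $\beta=-(\varphi_B\tau)|_B\in\Omega^{p}_{c}(B)$, one computes
\[
(j_{1\ast}-j_{2\ast})(\alpha,\beta)=\varphi_A\tau+\varphi_B\tau=(\varphi_A+\varphi_B)\tau=\tau,
\]
so $j_{1\ast}-j_{2\ast}$ is onto.

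For the middle, the inclusion ${\rm Im}(i_{1\ast}\oplus i_{2\ast})\subset{\rm Ker}(j_{1\ast}-j_{2\ast})$ follows because $j_{1\ast}i_{1\ast}$ and $j_{2\ast}i_{2\ast}$ both equal the single extension-by-zero map $\Omega^{p}_{c}(A\cap B)\to\Omega^{p}_{c}(X)$ associated with the inclusion $A\cap B\hookrightarrow X$, so their difference vanishes. Conversely, suppose $(\alpha,\beta)$ lies in the kernel, that is, $j_{1\ast}\alpha=j_{2\ast}\beta$ as forms on $X$. Since extension by zero does not enlarge the support, ${\rm supp}(j_{1\ast}\alpha)={\rm supp}\,\alpha\subset A$ and ${\rm supp}(j_{2\ast}\beta)={\rm supp}\,\beta\subset B$, and equality of the two forms forces this common support to lie in $A\cap B$. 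Hence $\omega:=\alpha|_{A\cap B}$ is a well-defined element of $\Omega^{p}_{c}(A\cap B)$, and because $\alpha$ and $\beta$ already vanish off $A\cap B$ and agree there, one checks $i_{1\ast}\omega=\alpha$ and $i_{2\ast}\omega=\beta$. This gives ${\rm Ker}(j_{1\ast}-j_{2\ast})\subset{\rm Im}(i_{1\ast}\oplus i_{2\ast})$, completing exactness.

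The main obstacle I anticipate is not the homological algebra but the bookkeeping of supports forced by the delicate notion of support element in Definition \ref{def:definition of compact support}: one must confirm that multiplication by $\varphi_A$ or $\varphi_B$ and extension by zero behave as expected on support sets, and that the forms $\alpha,\beta,\omega$ so constructed genuinely satisfy the smooth compatibility condition plot-by-plot (exactly as was verified by hand for the analogous $\eta_A,\eta_B$ in the proof of Theorem \ref{thm:Mayer-Vietoris 1}). The hypothesis that $X$ be $D$-Hausdorff enters precisely here, ensuring through Proposition \ref{prp:D-compact equivalent} that every form appearing above lands in the compactly supported subcomplex, which is what makes the short exact sequence — and hence the long exact sequence — meaningful.
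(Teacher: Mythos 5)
Your proposal is correct and follows essentially the same route as the paper's proof: reduce to the short exact sequence $0 \to \Omega^{p}_{c}(A \cap B) \to \Omega^{p}_{c}(A) \oplus \Omega^{p}_{c}(B) \to \Omega^{p}_{c}(X) \to 0$, prove surjectivity of $j_{1\ast}-j_{2\ast}$ via the partition of unity applied to $\varphi_A \times \omega$ and $-\varphi_B \times \omega$, and handle the kernel via the support argument showing the common support lies in $A \cap B$. Your write-up is in fact somewhat more careful than the paper's at the two steps it dismisses as ``not difficult'' (injectivity and ${\rm Im} \subset {\rm Ker}$), and it correctly identifies where the $D$-Hausdorff hypothesis and Proposition \ref{prp:D-compact equivalent} enter.
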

 \begin{proof}
  To see existence of the Mayer-Vietoris exact sequence,
  it suffices to show that the sequence
   \[
    0 \to \Omega^{p}_{c}(A \cap B) 
    \xrightarrow{ i_{1 \ast} \oplus i_{2 \ast}} 
    \Omega^{p}_{c}(A) \oplus \Omega^{p}_{c}(B)
    \xrightarrow{j_{1 \ast} - j_{2 \ast}}
    \Omega^{p}_{c}(X)
    \to
    0
   \]
  is exact for each $p$.
  It is not difficult to prove that $i_{1 \ast}\oplus i_{2 \ast}$ is injective and that Im$(i_{1 \ast} \oplus i_{2 \ast})$ is a linear
  subspace of Ker$(j_{1 \ast} - j_{2 \ast})$.
  Let $(\alpha, \beta)$ be an element of Ker$(j_{1 \ast}-j_{2 \ast})$.
  Since $j_{1 \ast}(\alpha)=j_{2\ast}(\beta)$ holds,
  $\alpha|_{A \cap B}=\beta|_{A \cap B}$
  and $\alpha=\beta=0$ on $X \setminus ({\rm supp}\alpha \cap {\rm supp} \beta)$.
  Therefore we have $(i_{1 \ast} \oplus i_{2 \ast})(\alpha|_{A \cap B})=(\alpha,\beta)$.
  We shall show that $(j_{1 \ast} -j_{2 \ast})$ is surjective.
  Let $\omega$ be an element of $\Omega^{p}_{c}(X)$.
  Then $j^{\ast}_{1}(\varphi_{A} \times \omega) \in \Omega^{p}_{c}(A)$ and $j_{2}^{\ast}(-\varphi_{B} \times \omega) \in \Omega^{p}_{c}(B)$.
  Since supp$(\varphi_{A} \times \omega) \subset A$ and supp$(\varphi_{B} \times \omega) \subset B$,  we have
   \begin{eqnarray}
    (j_{1 \ast}-j_{2 \ast})(j_{1}^{\ast}(\varphi_{A} \times \omega),j_{2}^{\ast}(-\varphi_{B} \times \omega ))
    &=&
    j_{1 \ast}j_{1}^{\ast}(\varphi_{A} \times \omega)-j_{2 \ast}j^{\ast}_{2}(-\varphi_{B} \times \omega)) \nonumber \\ \label{dia:shiki3}
    &=&
    \varphi_{A} \times \omega + \varphi_{B} \times \omega \nonumber \\
    &=&
    (\varphi_{A}+\varphi_{B}) \times \omega \nonumber \\
    &=&
    \omega. \nonumber
   \end{eqnarray}
 \end{proof}
\section{Long exact sequence for pair of diffeological spaces}
 In this section we shall prove Theorem \ref{thm:exact sequence}.
 Let $f_{0}, \ f_{1} \colon X \to Y$ be two smooth maps between diffeological spaces.
 We say that $f_{0}$ and $f_{1}$ are homotopic if there exists a homotopy $F \colon X \times \mathbf{R} \to Y$ satisfying
 $F(x,0)=f_{0}(x)$ and $F(x,1)=f_{1}(x)$.
 If $f_{0}$ and $f_{1}$ are homotopic then $f^{\ast}_{0}=f^{\ast}_{1} \colon H^{p}_{\rm dR}(Y) \to H^{p}_{\rm dR}(X)$ by {\cite[6.88]{Zem}}.

 Let $A$ be a subspace of a diffeological space $X$.
 If there exists a retraction $\gamma \colon X \to A$ such that $\gamma|A=1_{A}$,
 then $A$ is called a retract of $X$.
 Moreover we say that $A$ is a deformation retract of $X$ if $\gamma$ and the identity map $1_{X} \colon X \to X$ are homotopic.
  \begin{prp}\label{prp:retract compact}
   Let $A$ and $C$ be subsets of a diffeological space $X$ such that $C \subset A$.
   If there exists a $D$-open set $V$ of $X$ such that $A$ is a retract of $V$,
   then $C$ is $D$-compact in $A$ if and only if $C$ is $D$-compact in $X$.
  \end{prp}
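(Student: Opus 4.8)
The plan is to prove the two implications separately, leaning on the two descriptions of the $D$-topology on a subset recalled in the Remark: the sub-diffeology topology $\tau_{1}(A)$ and the trace topology $\tau_{2}(A)$, with $\tau_{2}(A) \subseteq \tau_{1}(A)$. I expect the implication ``$C$ is $D$-compact in $A$ $\Rightarrow$ $C$ is $D$-compact in $X$'' to be the easy one, and in fact not to use the retraction at all; the reverse implication is where the hypothesis on $V$ and the retraction $\gamma$ does the real work.

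First I would treat ``$C$ $D$-compact in $A$ $\Rightarrow$ $C$ $D$-compact in $X$''. Given a cover $\{U_{\lambda}\}$ of $C$ by $D$-open sets of $X$, I would intersect each member with $A$: since the inclusion $A \to X$ is smooth, hence continuous for the $D$-topologies, each $U_{\lambda} \cap A$ is $D$-open in $A$, and the family $\{U_{\lambda} \cap A\}$ still covers $C$ because $C \subseteq A$. Applying $D$-compactness of $C$ in $A$ yields a finite subfamily $\{U_{\lambda_{i}} \cap A\}$ covering $C$, so the corresponding $\{U_{\lambda_{i}}\}$ form the desired finite subcover in $X$.

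For the converse I would fix the retraction $\gamma \colon V \to A$ with $\gamma|_{A}=1_{A}$, noting that $C \subseteq A \subseteq V$ and that $V$ is $D$-open in $X$. The key step is to transport an arbitrary $D$-open cover of $C$ in $A$ up to a $D$-open cover in $X$ by pulling back along $\gamma$. Given a cover $\{W_{\lambda}\}$ of $C$ by $D$-open sets of $A$, I would form $\gamma^{-1}(W_{\lambda})$; since $\gamma$ is smooth this set is $D$-open in $V$, and since $V$ is $D$-open in $X$, Lemma \ref{lmm:D-open subspace} makes it $D$-open in $X$. The identity $\gamma^{-1}(W_{\lambda}) \cap A = W_{\lambda}$, which holds because $\gamma$ fixes $A$ pointwise, shows that $\{\gamma^{-1}(W_{\lambda})\}$ covers $C$. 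Invoking $D$-compactness of $C$ in $X$ produces a finite subfamily $\{\gamma^{-1}(W_{\lambda_{i}})\}$ covering $C$, and restricting back to $A$ (again using $\gamma|_{A}=1_{A}$) shows $\{W_{\lambda_{i}}\}$ covers $C$, as required.

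The main obstacle, and the sole reason the retraction is needed, is precisely the gap between $\tau_{2}(A)$ and $\tau_{1}(A)$: a $D$-open cover of $C$ in the sub-diffeology of $A$ need not arise by restricting $D$-open sets of $X$, so $D$-compactness in $X$ cannot be applied directly. The retraction resolves this by assigning to each $A$-open set $W_{\lambda}$ a canonical $X$-open set $\gamma^{-1}(W_{\lambda})$ whose trace on $A$ is exactly $W_{\lambda}$. Once this pullback construction and the identity $\gamma^{-1}(W_{\lambda}) \cap A = W_{\lambda}$ are in place, the covering verifications and the finite-subcover bookkeeping in both directions are entirely routine.
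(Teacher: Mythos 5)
Your proposal is correct and follows essentially the same route as the paper: the easy implication uses only that traces of $D$-open sets of $X$ are $D$-open in $A$, and the converse pulls back a $D$-open cover of $C$ in $A$ along the retraction $\gamma\colon V\to A$, using smoothness of $\gamma$, Lemma~\ref{lmm:D-open subspace} (since $V$ is $D$-open in $X$), and the identity $\gamma^{-1}(W_{\lambda})\cap A = W_{\lambda}$. The only difference is cosmetic: you spell out the $D$-continuity justifications that the paper leaves implicit.
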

  \begin{proof}
   If $C$ is $D$-compact in $A$,
   then it is $D$-compact in $X$ since all $D$-open sets of $X$ are $D$-open in $A$.
   Conversely,
   let $C$ be a $D$-compact set of $X$.
   Let $\{U_{\lambda}|\lambda \in \Lambda \}$ be a cover of $C$,
   where $U_{\lambda}$ is $D$-open in $A$ for each $\lambda \in \Lambda$.
   Then we have
    \[
     C \subset \cup_{\lambda \in \Lambda} U_{\lambda}= \cup_{\lambda \in \Lambda}(\gamma^{-1}(U_{\lambda}) \cap A) \subset \cup_{\lambda \in \Lambda} \gamma^{-1}(U_{\lambda}),
    \]
   where $\gamma \colon V \to A$ is a retraction.
   Since $\gamma^{-1}(U_{\lambda})$ is $D$-open in $V$,
   it is $D$-open in $X$ by Lemma \ref{lmm:D-open subspace}.
   Since $C$ is $D$-compact in $X$,
   $C \subset \cup_{1 \leq i \leq m} \gamma^{-1}(U_{\lambda_{i}})$ holds.
   Then we have
    \[
     C \subset \cup_{1 \leq i \leq m} ( \gamma^{-1}(U_{\lambda_{i}}) \cap A ) = \cup_{1 \leq i \leq m} U_{\lambda_{i}}.
    \]
   Therefore $C$ is $D$-compact in $A$.
  \end{proof}
%
%
%
%
 We will prove the following theorem.
 \begin{thm}\label{thm:exact sequence}
  Let $A$ be a $D$-compact set in a diffeological subcartesian space $X$.
  If there exists a $D$-open set $M$ of $X$ such that $A$ is a deformation retract of $M$,
  then we have a long exact sequence:
   \[
   \to H^{p}_{c}(X \setminus A) \xrightarrow{i_{\ast}} H^{p}_{c}(X) \xrightarrow{j^{\ast}}H^{p}_{c}(A) \xrightarrow{\delta} H^{p+1}_{c}(X \setminus A) \xrightarrow{i_{\ast}} \cdots,
   \]
  where $i \colon X \setminus A \to X$ and $j \colon A \to X$ are inclusions.
 \end{thm}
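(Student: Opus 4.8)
The plan is to follow the pattern of Theorems \ref{thm:Mayer-Vietoris 1} and \ref{thm:Mayer-Vietoris 2}: exhibit a short exact sequence of cochain complexes
\[
0 \to \Omega^{p}_{c}(X \setminus A) \xrightarrow{i_{\ast}} \Omega^{p}_{c}(X) \xrightarrow{j^{\ast}} \Omega^{p}_{c}(A) \to 0
\]
for each $p$ and feed it into the zig-zag lemma to obtain the asserted long exact sequence, with connecting homomorphism $\delta \colon H^{p}_{c}(A) \to H^{p+1}_{c}(X \setminus A)$. First I would record the preliminaries that make the two outer maps available. Since $X$ is a diffeological subcartesian space it is $D$-Hausdorff, so the $D$-compact set $A$ is $D$-closed and $X \setminus A$ is $D$-open; hence extension by zero $i_{\ast} \colon \Omega^{p}_{c}(X \setminus A) \to \Omega^{p}_{c}(X)$ is defined and injective, exactly as in Section \ref{section:caomactly support}. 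Because $A$ is $D$-compact we have $\Omega^{p}_{c}(A) = \Omega^{p}(A)$, so the restriction $j^{\ast}(\omega) = \omega|_{A}$ automatically lands in $\Omega^{p}_{c}(A)$ and is a cochain map.

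Next I would prove that $j^{\ast}$ is surjective, and here the deformation retract hypothesis enters. Let $\gamma \colon M \to A$ be the retraction (a deformation retract is in particular a retract), and let $\tau \in \Omega^{p}(A)$. Then $\gamma^{\ast}\tau \in \Omega^{p}(M)$, and since $A$ is $D$-closed and $M$ is a $D$-open neighborhood of $A$, Corollary \ref{crl:partition} supplies a function $\varphi \colon X \to \mathbf{R}$ with $\operatorname{supp}\varphi \subset M$ and $\varphi \equiv 1$ on $A$. Setting $\omega = \varphi \cdot \gamma^{\ast}\tau$ (extended by $0$ off $\operatorname{supp}\varphi$) gives an element of $\Omega^{p}(X)$ with $\omega|_{A} = \varphi|_{A}\cdot(\gamma|_{A})^{\ast}\tau = \tau$, so $j^{\ast}\omega = \tau$. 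To see that $\omega$ is genuinely compactly supported I would cover the $D$-compact set $A$ by finitely many chart domains contained in $M$ and use these charts, together with the compactness of $A$, to arrange that the cutoff $\varphi$ has $D$-compact support; this technical refinement of Corollary \ref{crl:partition} is the only subtle point in the surjectivity argument.

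The heart of the proof is exactness in the middle. The inclusion $\operatorname{Im}(i_{\ast}) \subseteq \operatorname{Ker}(j^{\ast})$ is immediate, since a form extended by zero from $X \setminus A$ vanishes on $A$. For the reverse inclusion take $\omega \in \Omega^{p}_{c}(X)$ with $\omega|_{A}=0$; everything reduces to the implication $\omega|_{A}=0 \Rightarrow \operatorname{supp}\omega \cap A = \emptyset$, for then $\operatorname{supp}\omega$ is a $D$-compact subset of the $D$-open set $X \setminus A$, Proposition \ref{prp:D-compact equivalent} makes $\alpha = \omega|_{X \setminus A}$ an element of $\Omega^{p}_{c}(X \setminus A)$, and $i_{\ast}\alpha = \omega$. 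By Definition \ref{def:definition of compact support} the first half of this is clean: for each $a \in A$ the constant plot at $a$ is a plot of $X$ (Covering axiom) along which $\omega$ vanishes because $\omega|_{A}=0$, so $a$ is not a support element of $\omega$, and thus all support elements lie in $X \setminus A$. The obstacle is the passage to the closure: I must still rule out that points of $A$ are limits of support elements lying in $X \setminus A$. This is exactly what the deformation retract neighborhood $M$ is for — I would use the retraction and its homotopy to the identity (Proposition \ref{prp:retract compact} and \cite[6.88]{Zem}) to show that $\omega$ is controlled near $A$ so that $\operatorname{supp}\omega$ cannot accumulate on $A$ — and I expect this to be the main difficulty of the whole argument.

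Once the displayed sequence is shown exact for every $p$, the standard zig-zag lemma delivers the long exact sequence in the stated form, with the induced maps $i_{\ast}$, $j^{\ast}$ and connecting homomorphism $\delta$ as asserted.
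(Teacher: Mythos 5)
There is a genuine gap, and it sits exactly where you flag ``the main difficulty'': exactness in the middle of your proposed short exact sequence fails at the cochain level, and the deformation retract hypothesis cannot repair it. The implication you need, $\omega|_{A}=0 \Rightarrow \mathrm{supp}\,\omega \cap A = \emptyset$, is simply false. Take $X=\mathbf{R}$ (a diffeological subcartesian space), $A=\{0\}$ (which is $D$-compact), $M=\mathbf{R}$ with the linear deformation retraction onto the origin, $p=0$, and let $\omega=f$ be a compactly supported smooth function with $f(x)=x$ near $0$. Then $j^{\ast}(f)=f(0)=0$, so $f \in \mathrm{Ker}(j^{\ast})$; but any element of $\mathrm{Im}(i_{\ast})$ is the extension by zero of a form whose support is $D$-compact in $\mathbf{R}\setminus\{0\}$, hence closed in $\mathbf{R}$ and disjoint from $0$, so it vanishes on an entire neighborhood of $0$ --- which $f$ does not. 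Thus $\mathrm{Ker}(j^{\ast}) \neq \mathrm{Im}(i_{\ast})$, the sequence $0 \to \Omega^{p}_{c}(X\setminus A) \to \Omega^{p}_{c}(X) \to \Omega^{p}_{c}(A) \to 0$ is not exact, and the zig-zag lemma does not apply. The deformation retract is a statement about the smooth homotopy type of a neighborhood of $A$; it puts no constraint forcing supports of forms that vanish on $A$ to stay away from $A$.

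This is precisely why the paper (following Madsen--Tornehave) does \emph{not} use your sequence. Instead it sets $\Omega^{p}_{c}(X,A)=\mathrm{Ker}(j^{\ast})$, uses surjectivity of $j^{\ast}$ (your cutoff argument, which is sound and is essentially part (1) of Lemma \ref{lmm:three lemma}) to get the short exact sequence $0 \to \Omega^{p}_{c}(X,A) \to \Omega^{p}_{c}(X) \to \Omega^{p}_{c}(A) \to 0$ and the associated long exact sequence in $H^{p}_{c}(X,A)$, and then proves in Proposition \ref{prp:cohomology equivalence} that the cochain map $i_{\ast}\colon \Omega^{p}_{c}(X\setminus A) \to \Omega^{p}_{c}(X,A)$ induces an isomorphism \emph{on cohomology}, not on cochains. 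That is where the deformation retract genuinely enters: by parts (2) and (3) of Lemma \ref{lmm:three lemma}, a relative cocycle $\omega$ can be corrected by an exact term $d\sigma$ so that $\omega - d\sigma$ vanishes on the whole neighborhood $U_{A}$ of $A$, after which its restriction to $X\setminus A$ is compactly supported there and represents the same class. Your outline conflates ``quasi-isomorphism'' with ``isomorphism of complexes''; the cohomological correction step is not a technical refinement of your argument but a structurally different proof, and without it the theorem is not reached.
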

 We prove this by using the argument similar to that of {\cite[Proposition 13.11]{MJ}}.
 The map $j$ coincides with the composite of the inclusions:
  \[
   A \xrightarrow{k_{1}} M \xrightarrow{k_{2}} X.
  \]
 Since $A$ is $D$-compact by Proposition \ref{prp:retract compact} and it is a deformation retract of $M$,
 a map $\gamma^{\ast} \colon H^{p}_{c}(A)=H^{p}_{\rm dR}(A) \to H^{p}_{\rm dR}(M)$ is an isomorphism,
 where $\gamma \colon M \to A$ is a retraction.
 Moreover,
 there exists a $D$-open set $U_{A}$ of $X$ such that 
  \[
   A \subset U_{A} \subset \overline{U}_{A} \subset M
  \]
 since $X$ is $D$-normal.
 Therefore there exists a function $\varphi \colon X \to \mathbf{R}$ such that supp$\varphi \subset M$ and $\varphi \equiv 1$ on $\overline{U}_{A}$ by Corollary \ref{crl:partition}.
 Then we have the following lemma.
 \begin{lmm}\label{lmm:three lemma}
  \begin{enumerate}
   \item\label{item:condition1}
    $j^{\ast} \colon \Omega^{p}_{c}(X) \to \Omega^{p}_{c}(A)$ is surjective.
   \item\label{item:condition2}
    For any $\omega$ in $Z^{p}_{c}(A)$.
    there exists $\tau$ in $\Omega^{p}_{c}(X)$ such that $j^{\ast}(\tau)=\omega$ and $d\tau |_{U_{A}}$ is zero.
   \item\label{item:condition3}
    For any $\omega$ in $\Omega^{p}_{c}(X)$ such that {\rm supp}$(d\tau) \cap A$ is empty and $j^{\ast}(\omega)$ is zero,
    there exists $\sigma$ in $\Omega^{p-1}_{c}(X)$ such that $(\omega - d\sigma)|_{U_{A}}$ is zero.
  \end{enumerate}
 \end{lmm}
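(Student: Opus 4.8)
\emph{Plan.} All three statements will be proved from the single explicit extension $\tau:=\varphi\cdot\gamma^{\ast}\omega$ together with the cochain homotopy coming from the deformation retraction. Write $\iota=k_{1}\colon A\to M$ for the inclusion, so that for a form on $X$ one has $j^{\ast}=\iota^{\ast}\circ(\,\cdot\,|_{M})$; recall also that $H^{p}_{c}(A)=H^{p}_{\mathrm{dR}}(A)$ since $A$ is $D$-compact, so every form in $\Omega^{p}_{c}(A)$ is simply an element of $\Omega^{p}(A)$. The homotopy $H\colon M\times\mathbf{R}\to M$ witnessing $1_{M}\simeq\iota\circ\gamma$ produces, by the construction underlying {\cite[6.88]{Zem}}, a linear operator $K\colon\Omega^{p}(M)\to\Omega^{p-1}(M)$ with
\[
 \mathrm{id}-\gamma^{\ast}\iota^{\ast}=dK+Kd\qquad\text{on }\Omega^{\ast}(M),
\]
where $(K\alpha)(x)$ is obtained by integrating $H^{\ast}\alpha$ along the path $t\mapsto H(x,t)$. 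A form defined on $M$ and vanishing outside $\mathrm{supp}\,\varphi\subset M$ is extended by zero and regarded as a form on $X$.

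For \ref{item:condition1} and \ref{item:condition2}, given $\omega\in\Omega^{p}(A)$ I would put $\tau:=\varphi\cdot\gamma^{\ast}\omega$. Its support lies inside $\mathrm{supp}\,\varphi$, and one checks that $\tau\in\Omega^{p}_{c}(X)$. Because $\varphi\equiv1$ on $U_{A}\supset A$ and $\gamma|_{A}=1_{A}$, restriction to $A$ gives
\[
 j^{\ast}\tau=\tau|_{A}=(\gamma|_{A})^{\ast}\omega=\omega,
\]
which is \ref{item:condition1}. If in addition $d\omega=0$, then $\gamma^{\ast}\omega$ is closed, so $d\tau=d\varphi\wedge\gamma^{\ast}\omega+\varphi\,\gamma^{\ast}(d\omega)=d\varphi\wedge\gamma^{\ast}\omega$, and this vanishes on $U_{A}$ since $\varphi$ is constant there and hence $d\varphi|_{U_{A}}=0$; this is \ref{item:condition2}.

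The real content is \ref{item:condition3}. Given $\omega\in\Omega^{p}_{c}(X)$ with $\mathrm{supp}(d\omega)\cap A=\emptyset$ and $j^{\ast}\omega=0$, I apply the homotopy identity to $\omega|_{M}$. Since $\iota^{\ast}(\omega|_{M})=j^{\ast}\omega=0$, we obtain
\[
 \omega|_{M}=d\bigl(K(\omega|_{M})\bigr)+K\bigl(d(\omega|_{M})\bigr).
\]
Set $\sigma:=\varphi\cdot K(\omega|_{M})$, extended by zero, so that $\sigma\in\Omega^{p-1}_{c}(X)$. On $U_{A}$ we have $\varphi\equiv1$, hence $d\sigma=dK(\omega|_{M})$ there, and therefore $(\omega-d\sigma)|_{U_{A}}=K(d(\omega|_{M}))|_{U_{A}}$. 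To finish one must see that this last term is zero. Here $N:=M\setminus\mathrm{supp}(d\omega)$ is a $D$-open neighborhood of the $D$-compact set $A$ on which $d\omega=0$; using that $A\times[0,1]$ is $D$-compact one shrinks $U_{A}$ so that the homotopy paths emanating from $U_{A}$ stay in $N$, i.e.\ $H(U_{A}\times[0,1])\subset N$. As $K$ only evaluates its argument along these paths, $K(d(\omega|_{M}))|_{U_{A}}$ sees $d\omega$ only on $N$, where it vanishes, giving $(\omega-d\sigma)|_{U_{A}}=0$.

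I expect the main obstacle to be precisely this localization in \ref{item:condition3}: one must first produce the cochain-level homotopy $K$ inside the diffeological category (the cited result {\cite[6.88]{Zem}} is stated only at the level of cohomology), and then control its effect near $A$. The delicate point is the tube-lemma step $H(U_{A}\times[0,1])\subset N$, which requires the homotopy to keep a whole neighborhood of the compact set $A$ inside $N$; verifying that the deformation retraction can be arranged this way, and separately that the extensions $\tau$ and $\sigma$ genuinely have $D$-compact support, are the two points I would treat most carefully.
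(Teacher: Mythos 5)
Your treatment of parts (1) and (2) coincides with the paper's: the paper takes exactly $\tau=k_{2\ast}(\varphi\times\gamma^{\ast}\omega)$ (your $\varphi\cdot\gamma^{\ast}\omega$ extended by zero), verifies $j^{\ast}\tau=\omega$ from $\varphi\equiv1$ and $\gamma\circ P=P$ for plots $P$ of $A$, and gets $d\tau|_{U_{A}}=\gamma^{\ast}(d\omega)|_{U_{A}}=0$ when $d\omega=0$. (The $D$-compactness of the support of $\tau$, which you rightly flag, is asserted rather than proved in the paper as well.)

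Part (3) is where you diverge, and your route has a genuine gap. After writing $\omega|_{M}=dK(\omega|_{M})+K(d(\omega|_{M}))$, everything hinges on killing the error term $K(d(\omega|_{M}))$ on $U_{A}$, and your proposed mechanism --- shrink $U_{A}$ until $H(U_{A}\times[0,1])\subset N=M\setminus\mathrm{supp}(d\omega)$ --- cannot be carried out from the stated hypotheses. A deformation retraction gives no control whatsoever over the intermediate tracks $t\mapsto H(x,t)$: even the tracks of points of $A$ may sweep through $\mathrm{supp}(d\omega)$. Concretely, take $X=M=\mathbf{R}^{n}$, $A=\{0\}$, and $H(x,t)=(1-t)x+\sin(\pi t)v$ with $v$ large; this is a smooth deformation retraction of $M$ onto $A$, yet every track, including the one starting at the point $0\in A$, passes through $v$. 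If $\mathrm{supp}(d\omega)$ sits near $v$, then already $H(A\times[0,1])\not\subset N$, so the premise needed for any tube-lemma argument fails and no shrinking of $U_{A}$ restores it. Two further points: the lemma asserts the conclusion for the \emph{fixed} $U_{A}$ chosen before its statement (the set on which $\varphi\equiv1$), so you are not free to shrink it; and in the application (Proposition \ref{prp:cohomology equivalence}) the lemma really is invoked for forms with $d\omega\neq0$ somewhere on $M$, so the error term is genuinely present there.

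The paper's own proof of (3) avoids track control entirely by working at the level of cohomology rather than of cochains: since $k_{1}^{\ast}\colon H^{p}_{\rm dR}(M)\to H^{p}_{\rm dR}(A)$ is an isomorphism (this is the only use of {\cite[6.88]{Zem}}, at cohomology level, so no operator $K$ has to be constructed) and $k_{1}^{\ast}[k_{2}^{\ast}(\omega)]=[j^{\ast}(\omega)]=0$, one gets a global primitive $\sigma_{0}\in\Omega^{p-1}(M)$ with $d\sigma_{0}=\omega|_{M}$, and then $\sigma=k_{2\ast}(\varphi\times\sigma_{0})$ satisfies $(\omega-d\sigma)|_{U_{A}}=0$ simply because $\varphi\equiv1$ on $U_{A}$. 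The cost is that forming the class $[k_{2}^{\ast}(\omega)]$ tacitly requires $\omega|_{M}$ to be closed, i.e.\ the paper really uses $\mathrm{supp}(d\omega)\cap M=\emptyset$ rather than $\mathrm{supp}(d\omega)\cap A=\emptyset$; but note that under that same reading your error term $K(d(\omega|_{M}))$ vanishes identically and your chain-level argument closes with no tube lemma at all. So the repair for your proof is not to localize the homotopy, but to replace that step by the paper's cohomological one: obtain a primitive of $\omega|_{M}$ on all of $M$ from the isomorphism $k_{1}^{\ast}$, then cut off with $\varphi$.
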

 \begin{proof}
  We shall show the condition {(\ref{item:condition2})}.
  Let $\omega$ be an element of $Z^{p}_{c}(A)$.
  Since $\varphi \times \gamma^{\ast}(\omega) \in \Omega^{p}_{c}(M)$,
  $\tau=k_{2 \ast}(\varphi \times \gamma^{\ast}(\omega))$ is an element of $\Omega^{p}_{c}(X)$.
  Then for any plot $P \colon U \to A$ of $A$,
  we have
   \begin{eqnarray}
    j^{\ast}(\tau)(P)
    &=&
    j^{\ast} k_{2 \ast}( \varphi \times \gamma^{\ast}(\omega))(P) \nonumber \\
    &=&
    (\varphi \times \gamma^{\ast}(\omega))(P) \nonumber \\
    &=&
    \varphi(P) \times \omega(\gamma \circ P). \nonumber 
   \end{eqnarray}
  But $\varphi  \equiv 1$ and $\gamma \circ P=P$ on $A$,
  we have $j^{\ast}(\tau)(P)=\omega(P)$.
  Moreover for any plot $Q \colon V \to X$ of $X$,
  we have
   \begin{eqnarray}
    d \tau |_{U_{A}}(Q) 
    &=&
    d \tau (Q|_{Q^{-1}(U_{A})}) \nonumber \\
    &=&
    d \omega(\gamma \circ Q|_{Q^{-1}(U_{A})}) \nonumber \\
    &=&
    0. \nonumber
   \end{eqnarray}
  Similarly,
  we can prove the condition {(\ref{item:condition1})} in the same argument.
  We shall show the condition {(\ref{item:condition3})}.
  Let $\omega$ be an element of $\Omega^{p}_{c}(X)$ such that {\rm supp}$(d \tau) \cap A= \emptyset$ and $j^{\ast}(\omega)=0$.
  Then $k_{2}^{\ast}(\omega) \in \Omega^{p}(M)$.
  We have 
   \[
    k_{1}^{\ast}[k_{2}^{\ast}(\omega)]=[k_{1}^{\ast}k_{2}^{\ast}(\omega)]=[(k_{2} \circ k_{1})^{\ast}(\omega)]=[j^{\ast}(\omega)]=0.
   \]
  Since $k_{1}^{\ast} \colon H^{p}_{\rm dR}(M) \to H^{p}_{\rm dR}(A)=H^{p}_{c}(A)$ is an isomorphism,
  $[k_{2}^{\ast}(\omega)]=0$ holds.
  Thus there exists $\sigma_{0}$ in $\Omega^{p-1}(M)$ such that $d \sigma_{0}=k^{\ast}_{2}(\omega)=\omega|_{M}$.
  Then $\varphi \times \sigma_{0}$ is in $\Omega^{p-1}_{c}(M)$ and $d(\varphi \times \sigma_{0}) |_{U_{A}}= \omega |_{U_{A}}$
  since {\rm supp}$\varphi \subset M$ and $\varphi \equiv 1$ on $U_{A}$.
  Let $\sigma=k_{2 \ast}(\varphi \times \sigma_{0}) \in \Omega^{p}_{c}(X)$.
  Clearly,
  $(d\sigma - \omega)|_{U_{A}}=0$ holds.
 \end{proof}
Let $\Omega^{p}_{c}(X,A)$ be the kernel of the chain map $j^{\ast} \colon \Omega^{p}_{c}(X) \to \Omega^{p}_{c}(A)$.
Let us denote the cohomology of the subcomplex $\{\Omega^{p}_{c}(X,A),d\}$ by $H^{p}_{c}(X,A)$.
By the property {(\ref{item:condition1})} of Lemma \ref{lmm:three lemma},
we have a short exact sequence:
 \[
  0 \to \Omega^{p}_{c}(X,A) \xrightarrow{l_{\ast}} \Omega^{p}_{c}(X) \xrightarrow{j^{\ast}} \Omega^{p}_{c}(A) \to 0,
 \]
where $l_{\ast}$ is the inclusion map.
Thus we have the following long exact sequence:
 \begin{eqnarray}
  \to H^{p}_{c}(X,A) \xrightarrow{l_{\ast}} H^{p}_{c}(X) \xrightarrow{j^{\ast}} H^{p}_{c}(A) \xrightarrow{\delta} H^{p+1}_{c}(X,A) \to \cdots. \label{dia:exact sequence}
 \end{eqnarray}
 
 Now, we get a cochain map $i_{\ast} \colon \Omega^{p}_{c}(X \setminus A) \to \Omega^{p}_{c}(X,A)$
 since for any $\tau$ in $\Omega^{p}_{c}(X \setminus A)$,
 $j^{\ast} i_{\ast}( \tau)=0$ holds.
 Then we have the following.
  \begin{prp}\label{prp:cohomology equivalence}
   $i_{\ast} \colon H^{p}_{c}(X \setminus A) \to H^{p}_{c}(X,A)$ is an isomorphism.
  \end{prp}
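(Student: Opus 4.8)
The plan is to show that the extension-by-zero map $i_{\ast}$ induces an isomorphism on cohomology by treating injectivity and surjectivity separately, with surjectivity as the delicate point. First I would record the elementary but crucial fact underlying both directions: if $\omega \in \Omega^{p}_{c}(X)$ satisfies $\omega|_{U_{A}} = 0$, then no point of $U_{A}$ is a support element of $\omega$. Indeed, since $U_{A}$ is $D$-open, the restriction of any plot $P$ of $X$ to $P^{-1}(U_{A})$ is a plot of $U_{A}$, so $\omega|_{U_{A}} = 0$ forces $\omega(P)(r) = 0$ whenever $P(r) \in U_{A}$. Hence $\mathrm{supp}\,\omega \subset X \setminus U_{A} \subset X \setminus A$, a $D$-compact set lying in the $D$-open set $X \setminus A$. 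By Proposition \ref{prp:D-compact equivalent} we get $\omega|_{X \setminus A} \in \Omega^{p}_{c}(X \setminus A)$ and $i_{\ast}(\omega|_{X \setminus A}) = \omega$. In short, any compactly supported form vanishing on $U_{A}$ lies in the image of $i_{\ast}$; this is what lets condition (\ref{item:condition3}) of Lemma \ref{lmm:three lemma} be converted into statements about $i_{\ast}$.

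For injectivity I would take a closed $\tau \in \Omega^{p}_{c}(X \setminus A)$ with $i_{\ast}(\tau) = d\eta$ for some $\eta \in \Omega^{p-1}_{c}(X,A)$, so $j^{\ast}(\eta) = 0$. Then $\mathrm{supp}(d\eta) = \mathrm{supp}(i_{\ast}\tau) \subset X \setminus A$, so condition (\ref{item:condition3}) of Lemma \ref{lmm:three lemma} yields $\sigma \in \Omega^{p-2}_{c}(X)$ with $(\eta - d\sigma)|_{U_{A}} = 0$. By the observation above, $\eta - d\sigma = i_{\ast}(\xi)$ for a unique $\xi \in \Omega^{p-1}_{c}(X \setminus A)$; applying $d$ and using that $i_{\ast}$ is an injective chain map gives $d\xi = \tau$, hence $[\tau] = 0$. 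Thus $i_{\ast}$ is injective on cohomology.

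For surjectivity I would begin with a relative cocycle $\omega \in \Omega^{p}_{c}(X,A)$, i.e.\ $j^{\ast}(\omega) = 0$ and $d\omega = 0$. Condition (\ref{item:condition3}) again produces $\sigma \in \Omega^{p-1}_{c}(X)$ with $\omega - d\sigma$ vanishing on $U_{A}$, whence $\omega - d\sigma = i_{\ast}(\tau)$ for a closed $\tau \in \Omega^{p}_{c}(X \setminus A)$. To conclude $[\omega] = i_{\ast}[\tau]$ in $H^{p}_{c}(X,A)$, however, I must know that $\sigma$ is a \emph{relative} cochain, that is $j^{\ast}(\sigma) = 0$; otherwise $d\sigma$ need not be a coboundary in the subcomplex $\Omega^{\bullet}_{c}(X,A)$, and indeed the class it represents is precisely the image of $[\sigma_{0}|_{A}]$ under the connecting map of (\ref{dia:exact sequence}). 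This is the main obstacle, and condition (\ref{item:condition3}) as stated does not provide it. I would overcome it by refining the construction in the proof of Lemma \ref{lmm:three lemma}: the primitive $\sigma_{0} \in \Omega^{p-1}(M)$ of $\omega|_{M}$ is determined only up to a closed form, and because $k^{\ast}_{1} \colon H^{p-1}_{\mathrm{dR}}(M) \to H^{p-1}_{\mathrm{dR}}(A)$ is an isomorphism and $\gamma \colon M \to A$ retracts $M$ onto $A$, one may subtract from $\sigma_{0}$ a suitable closed form and a term $d(\gamma^{\ast}\mu)$ so that $\sigma_{0}|_{A} = 0$, all without disturbing $d\sigma_{0} = \omega|_{M}$. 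Since $\varphi \equiv 1$ on $A$, the resulting $\sigma = k_{2\ast}(\varphi \times \sigma_{0})$ satisfies both $(\omega - d\sigma)|_{U_{A}} = 0$ and $j^{\ast}(\sigma) = 0$, so $\sigma \in \Omega^{p-1}_{c}(X,A)$ and $[\omega] = i_{\ast}[\tau]$.

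Combining the two directions shows $i_{\ast}$ is an isomorphism. I expect the only genuinely nontrivial points to be the support bookkeeping in the first paragraph (making sure that vanishing on $U_{A}$ really forces membership in the image of $i_{\ast}$ and that all supports remain $D$-compact) and the relativization of $\sigma$ in the surjectivity step; the long exact sequence (\ref{dia:exact sequence}) plays no role in this proposition itself and is needed only afterwards to deduce Theorem \ref{thm:exact sequence}.
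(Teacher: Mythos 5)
Your proof is correct, and its injectivity half is essentially the paper's own argument: apply condition (\ref{item:condition3}) of Lemma \ref{lmm:three lemma} to the primitive $\eta$ of $i_{\ast}(\tau)$, restrict $\eta - d\sigma$ to $X \setminus A$, and use that $i_{\ast}$ is an injective chain map. The genuine difference is in surjectivity, where you have correctly isolated the crux: condition (\ref{item:condition3}) produces $\sigma \in \Omega^{p-1}_{c}(X)$ with $(\omega - d\sigma)|_{U_{A}} = 0$, but $d\sigma$ is a coboundary of the subcomplex $\Omega^{\bullet}_{c}(X,A)$ only if $j^{\ast}(\sigma) = 0$, which the condition as stated does not provide. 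You repair this by reopening the proof of the lemma and normalizing the primitive $\sigma_{0} \in \Omega^{p-1}(M)$ of $\omega|_{M}$ so that $\sigma_{0}|_{A} = 0$; this works, and in fact subtracting the single closed form $\gamma^{\ast}(\sigma_{0}|_{A})$ already does it, since $k_{1}^{\ast}\gamma^{\ast} = \mathrm{id}$ (your two-step correction by a closed form plus $d(\gamma^{\ast}\mu)$ is also fine). After this, $\sigma = k_{2\ast}(\varphi \times \sigma_{0})$ is automatically relative because $\varphi \equiv 1$ on $A$. The paper instead keeps Lemma \ref{lmm:three lemma} as a black box and fixes $\sigma$ at the point of use: $j^{\ast}(\sigma)$ is a closed form on the $D$-compact set $A$, so condition (\ref{item:condition2}) supplies $\tau \in \Omega^{p-1}_{c}(X)$ with $j^{\ast}(\tau) = j^{\ast}(\sigma)$ and $d\tau|_{U_{A}} = 0$; then $\sigma - \tau$ lies in $\Omega^{p-1}_{c}(X,A)$ and still satisfies $(\omega - d(\sigma - \tau))|_{U_{A}} = 0$, so $i_{\ast}[(\omega - d(\sigma-\tau))|_{X\setminus A}] = [\omega]$. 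Both routes are sound: the paper's is shorter given the lemma as stated, while yours effectively strengthens condition (\ref{item:condition3}) (the correcting cochain can be chosen relative) and renders condition (\ref{item:condition2}) unnecessary for this half of the argument. Your opening support bookkeeping, and your remark that the exact sequence (\ref{dia:exact sequence}) plays no role in this proposition, both agree with the paper.
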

   \begin{proof}
    We shall show that $i_{\ast}$ is injective.
    Let $[\omega]$ be an element of Ker $i_{\ast}$.
    Since $i_{\ast}[\omega]=0$ holds,
    there exists $\tau$ in $\Omega^{p-1}_{c}(X,A)$ such that $d \tau=i_{\ast}(\omega)$.
    Then for any plot $P$ of $A$,
    we get $d \tau(P)=i_{\ast}(\omega)(P)=0$.
    Thus supp$(d \tau) \cap A$ is empty and $j^{\ast}(\tau)$ is zero.
    Then there exists $\sigma$ in $\Omega^{p-2}(X)$ such that 
     \begin{eqnarray}
      (\tau-d \sigma)|_{U_{A}}=0 \nonumber \label{dia:shiki5}
     \end{eqnarray}
    by the property {(\ref{item:condition3})} of Lemma \ref{lmm:three lemma}.
    Hence $\tilde{\tau}=(\tau -d \sigma)|_{X \setminus A}$ is an element of $\Omega^{p-1}_{c}(X \setminus A)$. 
    Hence we have
     \begin{eqnarray}
      d \tilde{\tau} &=& (d \tau - dd\sigma)|_{X \setminus A} \nonumber \\
      &=&
      d \tau |_{X \setminus A} \nonumber \\
      &=&
      i_{\ast}(\omega)|_{X\setminus A} \nonumber \\
      &=&
      \omega. \nonumber  
     \end{eqnarray}
      Therefore $i_{\ast}$ is injective since $[ \omega]=0$ holds.
      Next,
      we shall show that $i_{\ast}$ is surjective.
      Let $[\omega]$ be an element of $H^{p}_{c}(X,A)$.
      Since $\omega$ in $Z^{p}_{c}(X,A)$, 
      there exists $\sigma$ in $\Omega^{p-1}_{c}(X)$ such that $(\omega-d \sigma)|_{U_{A}}=0$ by the property {(\ref{item:condition3})} of Lemma \ref{lmm:three lemma}.
      Since we have
       \[
        d(j^{\ast}(\sigma))=j^{\ast}(d \sigma)=j^{\ast}(\omega)=0,
       \]
      there exists $\tau$ in $\Omega^{p-1}_{c}(X)$ such that $j^{\ast}(\tau)=j^{\ast}(\sigma)$ and $d \tau|_{U_{A}}=0$ by the property {(\ref{item:condition2})} of Lemma \ref{lmm:three lemma}.
      Then $\sigma - \tau$ is an element of $\Omega^{p-1}_{c}(X,A)$ since $j^{\ast}(\sigma - \tau)=0$ holds.
      Let $\tilde{\omega}=(\omega-d(\sigma-\tau))|_{X \setminus A}=(\omega - d \sigma)|_{X\setminus A}+d \tau|_{X\setminus A}$.
      Then $\tilde{\omega}$ in $\Omega^{p-1}_{c}(X \setminus A)$ and we get
       \[
        i_{\ast}[\tilde{\omega}]=i_{\ast}[(\omega - d \sigma)|_{X \setminus A}]=[i_{\ast} i^{\ast}(\omega-d \sigma)]=[\omega-d \sigma]=[\omega].
       \]
       Therefore $i_{\ast}$ is surjective.
   \end{proof}
   Therefore we have Theorem \ref{thm:exact sequence} by the exact sequence (\ref{dia:exact sequence}) and Proposition \ref{prp:cohomology equivalence}.
  \begin{crl}
   Let $X$ be a $D$-compact diffeological subcartesian space. 
   Let $A$ be a $D$-closed subset of $X$.
   If there exists a $D$-open subset $M$ of $X$ such that $A$ is a deformation retract of $M$,
   then we have a long exact sequence:
    \[
     \to H^{p}_{c}(X \setminus A) \xrightarrow{i_{\ast}} H^{p}_{\rm dR}(X) \xrightarrow{j^{\ast}} H^{p}_{\rm dR}(A) \xrightarrow{\delta} H^{p+1}_{c}(X\setminus A) \to \cdots.
    \]
  \end{crl}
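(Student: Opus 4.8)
The plan is to deduce this statement directly from Theorem \ref{thm:exact sequence}, by identifying the compactly supported cohomology groups appearing there with ordinary de Rham cohomology groups in the two places where the hypotheses allow it. The only genuine work is to confirm that every hypothesis of Theorem \ref{thm:exact sequence} is in force and that the two identifications $H^{p}_{c}(X) = H^{p}_{\rm dR}(X)$ and $H^{p}_{c}(A) = H^{p}_{\rm dR}(A)$ are legitimate.

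First I would verify that $A$ is $D$-compact in $X$. Since $X$ is $D$-compact and $A$ is a $D$-closed subset of $X$, Proposition \ref{prp: subcompact} yields that $A$ is $D$-compact in $X$. Combined with the hypothesis that $A$ is a deformation retract of the $D$-open set $M$, this places us in the exact setting of Theorem \ref{thm:exact sequence}, which then produces the long exact sequence
\[
 \to H^{p}_{c}(X \setminus A) \xrightarrow{i_{\ast}} H^{p}_{c}(X) \xrightarrow{j^{\ast}} H^{p}_{c}(A) \xrightarrow{\delta} H^{p+1}_{c}(X \setminus A) \to \cdots.
\]

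Next I would replace the two middle terms. Because $X$ is $D$-compact, every differential form on $X$ is automatically compactly supported, so $\Omega^{p}_{c}(X) = \Omega^{p}(X)$ and hence $H^{p}_{c}(X) = H^{p}_{\rm dR}(X)$, exactly as recorded after Definition \ref{def:definition of compact support}. The same reasoning applies to $H^{p}_{c}(A)$ once I know that $A$ is $D$-compact in the intrinsic sense, that is, $D$-compact in $A$ rather than merely in $X$. This is the single point requiring care: the identification $H^{p}_{c}(A) = H^{p}_{\rm dR}(A)$ rests on the $D$-compactness of $A$ as a diffeological space in its own right, not on its being $D$-closed in $X$. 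Since $A$ is a retract of the $D$-open set $M$ and is $D$-compact in $X$, Proposition \ref{prp:retract compact} applied with $C = A$ upgrades this to $D$-compactness of $A$ in $A$, whence $\Omega^{p}_{c}(A) = \Omega^{p}(A)$ and $H^{p}_{c}(A) = H^{p}_{\rm dR}(A)$.

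Substituting these two equalities into the long exact sequence furnished by Theorem \ref{thm:exact sequence} gives precisely the asserted sequence. I expect no substantive obstacle beyond the bookkeeping just described; the only subtle step is recognizing that passing from $H^{p}_{c}$ to $H^{p}_{\rm dR}$ for the subspace $A$ depends on its intrinsic $D$-compactness, which is supplied by the retract structure through Proposition \ref{prp:retract compact} rather than by $D$-closedness alone.
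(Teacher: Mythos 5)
Your proposal is correct and follows essentially the same route as the paper's own proof: both deduce the sequence from Theorem \ref{thm:exact sequence} after using Proposition \ref{prp: subcompact} to get $D$-compactness of $A$ in $X$ and Proposition \ref{prp:retract compact} (via the retract structure onto $M$) to upgrade this to intrinsic $D$-compactness, which justifies $H^{p}_{c}(A)=H^{p}_{\rm dR}(A)$, alongside $H^{p}_{c}(X)=H^{p}_{\rm dR}(X)$ from the $D$-compactness of $X$. Your write-up in fact makes explicit the intrinsic-versus-ambient compactness distinction that the paper's terse proof leaves implicit.
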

  \begin{proof}
   Clearly,
   $H^{p}_{c}(X)=H^{p}_{\rm dR}(X)$ and $H^{p}_{c}(A)=H^{p}_{\rm dR}(A)$ since $A$ is $D$-compact by Proposition \ref{prp: subcompact} and Proopsition \ref{prp:retract compact}.
   Therefore the exactness of the sequence above follows from Theorem \ref{thm:exact sequence}. 
  \end{proof}

\end{document}